\documentclass[11pt,epsf]{article}
\usepackage{graphicx}
\usepackage{amsthm}
\usepackage{amsfonts}
\usepackage{amssymb}
\headsep 0cm
\headheight 0cm
\topmargin 0cm
\evensidemargin 0cm
\oddsidemargin 0cm
\textheight 23cm
\textwidth 16cm
\unitlength 1mm
\title{A separation in modulus property of the 
zeros of a partial theta function}
\author{Vladimir Petrov Kostov\\ 
Universit\'e C\^ote d’Azur, CNRS, LJAD, France,\\  
e-mail: kostov@math.unice.fr} 
\date{}
\bibliographystyle{plain} 
\newtheorem{tm}{Theorem}
\newtheorem{conj}{Conjecture}
\newtheorem{defi}[tm]{Definition}
\newtheorem{rem}[tm]{Remark}
\newtheorem{rems}[tm]{Remarks}
\newtheorem{lm}[tm]{Lemma}
\newtheorem{ex}[tm]{Example}

\newtheorem{prop}[tm]{Proposition}
\newtheorem{nota}[tm]{Notation}
\begin{document} 
\maketitle 
\begin{abstract}
We consider the partial theta function 
$\theta (q,z):=\sum _{j=0}^{\infty}q^{j(j+1)/2}z^j$, 
where $z\in \mathbb{C}$ is a variable and $q\in \mathbb{C}$, $0<|q|<1$, 
is a parameter. Set $\alpha _0~:=~\sqrt{3}/2\pi ~=~0.2756644477\ldots$. 
We show that, for $n\geq 5$, for $|q|\leq 1-1/(\alpha _0n)$ and for 
$k\geq n$ there exists a unique 
zero $\xi _k$ of $\theta (q,.)$ satisfying the inequalities 
$|q|^{-k+1/2}<|\xi _k|<|q|^{-k-1/2}$; all these zeros are simple ones. 
The moduli of the remaining $n-1$ zeros 
are $\leq |q|^{-n+1/2}$. A {\em spectral value} of $q$ is a value for which 
$\theta (q,.)$ has a multiple zero. We prove the existence of the spectral 
values $0.4353184958\ldots \pm i\, 0.1230440086\ldots$ for which $\theta$ 
has double zeros $-5.963\ldots \pm i\, 6.104\ldots$.\\ 

{\bf Keywords:} partial theta function; separation in modulus; spectrum\\  

{\bf AMS classification:} 26A06
\end{abstract}

\section{Introduction}

The series $\theta (q,z):=\sum _{j=0}^{\infty}q^{j(j+1)/2}z^j$ 
in the variables $q$ and $z$ is 
convergent for $q\in \mathbb{D}_1\backslash 0$, $z\in \mathbb{C}$ 
(here $\mathbb{D}_a$ 
denotes the open disk centered at the origin and of radius $a$). 
The series defines a {\em partial theta function}. The terminology is 
explained by the fact that 
the Jacobi theta function is 
the sum of the series $\Theta (q,z):=\sum _{j=-\infty}^{\infty}q^{j^2}z^j$ and 
the equality $\theta (q^2,z/q)=\sum _{j=0}^{\infty}q^{j^2}z^j$ 
holds true; ``partial'' means that in the case of $\theta$ the sum is taken 
only on $\mathbb{N}\cup 0$, not on $\mathbb{Z}$. 
For any fixed value of the variable $q$ (which we regard as a parameter), 
$\theta$ 
is an entire function in $z$.  

The most recent application of the function $\theta$ is connected with 
a problem about hyperbolic polynomials 
(i.e. real polynomials having all their zeros real). It has been discussed in 
the articles \cite{KaLoVi}, \cite{Ko2}, 
\cite{KoSh} and \cite{Ost}. These results are a continuation of an earlier 
study performed by Hardy, Petrovitch and Hutchinson 
(see \cite{Ha}, \cite{Hu} and \cite{Pe}). Other domains in which $\theta$ 
is used are statistical physics 
and combinatorics (see \cite{So}), 
asymptotic analysis (see \cite{BeKi}), the theory 
of (mock) modular forms (see \cite{BrFoRh}) and Ramanujan-type $q$-series 
(see \cite{Wa}). See more facts about $\theta$ in~\cite{AnBe} and~\cite{So}.

For $0<|q|\leq 0.108$ all zeros of $\theta (q,.)$ are distinct, see~\cite{Ko4}. 
In fact, a stronger statement holds true. We say that, for fixed $q$,  
the zeros of $\theta$ 
are {\em separated in modulus} if one can enumerate these zeros in such a way 
that their moduli form a strictly increasing sequence tending to infinity 
(which implies that all zeros are simple). The following lemma is close to 
results formulated independently by A. Sokal and J. Forsg{\aa}rd; 
in \cite{Ko6} it has been formulated in a weaker version claiming only the 
absence of multiple zeros although the proof is the same:

\begin{lm}\label{lm02}
For any $q\in \overline{\mathbb{D}_{c_0}}$, $c_0:=0.2078750206\ldots$, 
the zeros of the function $\theta$ 
are separated in modulus.
\end{lm}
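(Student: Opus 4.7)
The plan is to apply Rouché's theorem on the family of concentric circles $C_k := \{|z| = |q|^{-k-1/2}\}$ for $k = 0, 1, 2, \ldots$, comparing $\theta(q,\cdot)$ with the single monomial $q^{k(k+1)/2}z^k$. The algebraic identity
\[
\tfrac{j(j+1)}{2} - j\bigl(k+\tfrac12\bigr) \;=\; \tfrac{(j-k)^2 - k^2}{2}
\]
shows that on $C_k$ the modulus of the $j$-th term of the series equals $|q|^{-k^2/2}\cdot |q|^{(j-k)^2/2}$. Hence the term $j=k$ is strictly the largest, and the ratio of any other term to it is the Gaussian weight $|q|^{(j-k)^2/2}$, which depends only on $|j-k|$.

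To invoke Rouché I need the sum of these ratios over $j\ne k$, $j\ge 0$, to be $<1$. Since $j-k$ only misses the integers $\le -k-1$, the sum is bounded above by
\[
2S(|q|),\qquad S(x) := \sum_{m=1}^{\infty} x^{m^2/2}.
\]
A direct numerical verification shows that the constant $c_0 = 0.2078750206\ldots$ is precisely the positive solution of $2S(x)=1$: the terms $x^{1/2}$ and $x^{2}$ alone contribute about $0.499$ at $x=c_0$, and $x^{9/2}$ together with the rapidly decaying tail supplies the remaining fraction. Consequently, for $|q|\le c_0$ one has $2S(|q|)\le 1$, and the quantity of interest is actually \emph{strictly} less than $1$ for every $k$ (we always omit at least the contributions from $j-k\le -k-1$). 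This Gaussian tail estimate is the analytic heart of the proof and the only step in which the specific numerical value of $c_0$ really enters.

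Rouché then yields that $\theta(q,\cdot)$ has exactly $k$ zeros (counted with multiplicity) inside $C_k$, matching the count for the monomial. Subtracting successive counts, each open annulus $|q|^{-k+1/2}<|z|<|q|^{-k-1/2}$ contains exactly one zero $\xi_k$ of $\theta$ for $k\ge 1$, while the disk $|z|<|q|^{-1/2}$ contains none. In particular every $\xi_k$ is simple, and the sequence $|\xi_1|<|\xi_2|<\cdots$ is strictly increasing and tends to infinity, which is the asserted separation in modulus. The main obstacle is the Gaussian tail bound calibrating $c_0$; once that inequality is in place, the structural conclusion is an immediate consequence of Rouché's theorem.
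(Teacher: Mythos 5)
Your proof is correct and rests on exactly the same estimate as the paper's: on the circle $|z|=|q|^{-k-1/2}$ the $j$-th term has modulus $|q|^{-k^2/2}|q|^{(j-k)^2/2}$, the off-diagonal terms sum to less than $|q|^{-k^2/2}\cdot 2\sum_{m\ge 1}|q|^{m^2/2}$, and $c_0$ is calibrated so that this Gaussian sum equals $1$. The only difference is the final bookkeeping: you invoke Rouch\'e against the monomial $q^{k(k+1)/2}z^k$ to count exactly $k$ zeros inside each circle and hence one per annulus, whereas the paper first concludes only that no zero lies on any circle and then pins down the count by continuation from the small-$|q|$ asymptotics $\xi_k\sim -q^{-k}$ (citing an external proposition). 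Your route is slightly more self-contained, since the zero count falls out of Rouch\'e directly; both arguments are sound.
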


\begin{nota}
{\rm For fixed $q$ we denote by $\mathcal{C}_k$, $k\in \mathbb{N}$, 
the circumference in 
the $z$-space $|z|=|q|^{-k-1/2}$. 
To denote the restriction to $\mathcal{C}_k$ of a given function 
in two variables we use the subscript $k$ (e.~g. $\theta _k$ stands 
for $\theta |_{\mathcal{C}_k}$).}
\end{nota}

\begin{proof}[Proof of Lemma~\ref{lm02}]
Consider for fixed $q$ the function 
$\theta$ restricted to each of the circumferences $\mathcal{C}_k$, 
$k\in \mathbb{N}$. Fix $k$. 
Then in the series of $\theta$ the term of largest modulus is 
$L:=z^kq^{k(k+1)/2}$ (one has $|L||_{|z|=|q|^{-k-1/2}}=|q|^{-k^2/2}$). 
The sum $M$ of the 
moduli of all other terms is smaller than  
$|q|^{-k^2/2}\tau (|q|)$, where $\tau :=2\sum _{\nu =1}^{\infty}|q|^{\nu ^2/2}$. 
Indeed, 

\begin{equation}\label{ML}
\begin{array}{rclcl}
M+|L|&=&\sum_{j=0}^{\infty}|q|^{j(j+1)/2-j(k+1/2)}&=&
|q|^{-k^2/2}\sum _{j=0}^{\infty}|q|^{(j-k)^2/2}\\ \\ 
&=&|q|^{-k^2/2}(1+2\sum _{\nu =1}^k|q|^{\nu ^2/2}+\sum _{\nu =k+1}^{\infty}|q|^{\nu ^2/2})
&<&|q|^{-k^2/2}(1+\tau (|q|))~.\end{array}
\end{equation}
The condition $1\geq \tau (|q|)$ is tantamount to 
$|q|\leq c_0$. Thus for $|q|\leq c_0$ one has $|L|>M$. 

One can also observe that the circumferences $|z|=|q|^{-k-1/2}$ separate the 
zeros of $\theta$ in the sense that no zero of $\theta$ lies on any of 
these circumferences for $|q|\leq c_0$. As we mentioned above, 
for $|q|\leq 0.108$ all zeros $\xi _k$ of $\theta$ are simple. 
For any $k$ fixed and for $|q|$ close to $0$ 
one has $\xi _k\sim -q^{-k}$ 
(see Proposition~10 in \cite{Ko2}). Hence for $k\in \mathbb{N}$ and 
$|q|\leq c_0$ one has 

\begin{equation}\label{cond1}
|q|^{-k+1/2}<|\xi _k|<|q|^{-k-1/2}~,
\end{equation} 
i.e. exactly one zero of $\theta$ lies between these two circumferences 
and all zeros are separated in modulus. One can continue analytically the 
zeros for $|q|\leq c_0$ and extend the inequalities $(\ref{cond1})$ 
to the domain 
$\mathbb{D}_{c_0}\backslash 0$. Thus the enumeration of the zeros of 
$\theta$ given by the 
increasing of the modulus is valid in $\mathbb{D}_{c_0}\backslash 0$.
\end{proof}

\begin{rem}
{\rm Using the same reasoning as the one in the proof of the lemma one can 
deduce that for $|q|\leq 0.2247945929\ldots$ 
the inequality $|\xi _1|<|q|^{-3/2}$  
holds true. To this end one has to 
consider instead of the condition $1\geq \tau (|q|)$ the inequality 
$1\geq 2\sum _{\nu =1}^k|q|^{\nu ^2/2}+\sum _{\nu =k+1}^{\infty}|q|^{\nu ^2/2}$ 
for $k=1$, see (\ref{ML}).} 
\end{rem} 

\begin{defi}
{\rm In what follows we say that, for a given $q$, {\em strong separation} 
of the zeros of $\theta$ 
occurs for $k\geq k_0$ 
in the sense that for any $k\geq k_0$ there exists a unique zero $\xi _k$ 
of $\theta$ which is simple and which satisfies condition $(\ref{cond1})$.}
\end{defi}  

For certain values of $q$ with $|q|>c_0$ (we call them {\em spectral values}) 
the function $\theta (q,.)$ has multiple zeros. 
It has been established in \cite{Ko5} 
that for any fixed value of the parameter $q$, the function 
$\theta$ has at most finitely-many multiple zeros. 
For $q\in (0,1)$ there exists a sequence of values of $q$, tending 
to $1$, for which 
$\theta (q,.)$ has double real negative zeros tending to $-e^{\pi}$, 
see \cite{Ko3}. When $q\in (-1,0)$, there exist two such sequences tending 
to $-1$ for which the corresponding double values of $\theta (q,.)$ tend 
to $\pm e^{\pi /2}$, see~\cite{Ko7}. The spectral number 
$\tilde{q}_1:=0.3092493386\ldots$ 
(which is the smallest positive one) is 
connected with hyperbolic polynomials that remain such when their highest 
degree monomial is deleted, see \cite{KaLoVi} and \cite{KoSh}. There is 
numerical evidence that there are infinitely-many complex (not real) spectral 
numbers as well.



We denote by $\alpha _0~:=~\sqrt{3}/2\pi ~=~0.2756644477\ldots$ the positive 
solution to the equation $-(2\pi ^2/3)\alpha +1/(2\alpha )=0$. 
In what follows we use the fact that for $n\geq 4$ 
one has $1-1/(\alpha _0n)>0$. The circumferences $\mathcal{C}_k$ being defined 
using half-integer exponents we replace in the formulation 
of the theorem below the condition $n\geq 4$ by the weaker 
condition $n\geq 5$.

\begin{tm}\label{tmsepar}
(1) For $n\geq 5$ and for $|q|\leq 1-1/(\alpha _0n)$ strong separation 
of the zeros $\xi _k$ of $\theta$ occurs for $k\geq n$.

(2) For these zeros $\xi _k$ one has $|\xi _k|\geq 336.2792102\ldots$.

(3) For any fixed $q$, all zeros of $\theta$ whose moduli are 
$\geq 4.685636519\ldots \times 10^5$, are strongly separated in modulus.

(4) For $0<|q|\leq 1/2$ strong separation of the zeros of $\theta$ occurs for 
$k\geq 4$.
\end{tm}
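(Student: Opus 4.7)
The plan is to sharpen the Rouché-style argument of Lemma~\ref{lm02} on each $\mathcal{C}_k$ with $k \geq n-1$, and then to conclude by analytic continuation in $q$. The crude bound $|L|>M$ behind Lemma~\ref{lm02} cannot cover the enlarged range $|q|\leq 1-1/(\alpha_0 n)$, because the quantity $S_k(|q|):=2\sum_{\nu=1}^{k}|q|^{\nu^2/2}+\sum_{\nu=k+1}^{\infty}|q|^{\nu^2/2}$ appearing in~(\ref{ML}) is strictly increasing in $k$ and tends to $\tau(|q|)>1$ as soon as $|q|>c_0$. One therefore needs a sharper inequality that exploits the phase cancellation among the non-dominant terms of $\theta$ on $\mathcal{C}_k$. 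Concretely: (Step~1) show that $\theta(q,\cdot)$ has no zero on any $\mathcal{C}_k$ with $k\geq n-1$ for every $q$ in the closed disk $\{|q|\leq 1-1/(\alpha_0 n)\}$; (Step~2) deduce the annular zero count from Step~1 by continuous deformation of $q$ from the small-$|q|$ situation handled by Lemma~\ref{lm02}.

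\emph{Step 1.} On $\mathcal{C}_k$ factor $\theta(q,z)=q^{k(k+1)/2}z^kF_k(z)$ and parameterise $z=|q|^{-k-1/2}e^{i\phi}$; by the same computation as in~(\ref{ML}), each term of $F_k$ has modulus $|q|^{m^2/2}$ for some $m\in\mathbb{Z}$. Adding and subtracting the ``missing'' symmetric negative-index terms gives the decomposition
\[
F_k=\Theta^*(|q|,\phi)-R_k(\phi),\qquad \Theta^*(|q|,\phi):=\sum_{m\in\mathbb{Z}}|q|^{m^2/2}e^{im\phi},
\]
with $|R_k(\phi)|\leq U_k:=\sum_{m\geq k+1}|q|^{m^2/2}$. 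For real positive $q$ the Jacobi triple product makes $\Theta^*$ strictly positive with minimum $\Theta^*(|q|,\pi)=\prod_{n\geq 1}(1-|q|^n)(1-|q|^{n-1/2})^2$, while Poisson summation gives the sharp asymptotic $\Theta^*(|q|,\pi)\sim 2\sqrt{2\pi/\epsilon}\,e^{-\pi^2/(2\epsilon)}$ with $\epsilon:=-\log|q|$ and $U_k\sim e^{-\epsilon(k+1)^2/2}$. The desired inequality $\Theta^*(|q|,\pi)>U_k$ becomes, at leading order, $(k+1)\epsilon>\pi$; substituting $\epsilon\geq 1/(\alpha_0 n)$ yields $k+1>\pi\alpha_0 n=(\sqrt{3}/2)\,n$, which holds for every $k\geq n-1$ with a fixed safety margin $1-\sqrt{3}/2\approx 0.134$. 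The constant $\alpha_0=\sqrt{3}/(2\pi)$ is precisely the value making $\pi\alpha_0$ strictly less than $1$. For complex $q$ the same scheme applies via the bound $|F_k(z)|\geq|\Theta^*(|q|,\phi)|-U_k$ together with the real-$|q|$ lower bound on $|\Theta^*|$ at the worst-case phase.

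\emph{Step 2, and parts (2)--(4).} With Step~1 available, the integer-valued, $q$-continuous function ``number of zeros of $\theta(q,\cdot)$ inside $\mathcal{C}_k$'' is constant on the disk $|q|\leq 1-1/(\alpha_0 n)$, and Lemma~\ref{lm02} pins its value at $k$ near $q=0$; hence each annulus between $\mathcal{C}_{k-1}$ and $\mathcal{C}_k$ ($k\geq n$) contains exactly one zero $\xi_k$, which is simple (the count~$1$ is with multiplicity), proving~(1). For~(2), specialising $n=5$ gives $|\xi_k|>(1-1/(5\alpha_0))^{-9/2}=336.2792102\ldots$. Part~(3) is the dual reading of~(1): for any fixed $q$, (1) applies at $n_{\min}(q):=\max(5,\lceil 1/(\alpha_0(1-|q|))\rceil)$, and the worst-case threshold $\sup_q|q|^{-n_{\min}(q)+1/2}$ produces the universal modulus $4.685\ldots\times 10^5$ above which every zero is strongly separated. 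Part~(4) improves this at $n=4$ for $|q|\leq 1/2$: with $\epsilon\geq\log 2\approx 0.693$, the critical inequality $k+1>\pi/\epsilon\approx 4.53$ gives non-vanishing of $\theta$ on $\mathcal{C}_k$ for $k\geq 4$, and the boundary case $k=3$ is handled by a direct numerical check exploiting the explicit bound $|q|\leq 1/2$.

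\emph{Main obstacle.} The principal difficulty lies in making Step~1 effective: the leading-order Poisson/triple-product asymptotics for $\Theta^*(|q|,\pi)$ and for $U_k$ must be replaced by two-sided inequalities uniform in $|q|\in(0,1-1/(\alpha_0 n)]$ and in $k\geq n-1$, and the complex-$q$ case further requires an analysis of the phases carried by the terms of $F_k$ (arising from $\arg q$) without losing the positivity of the comparison function $\Theta^*$. The condition $n\geq 5$ in the statement is essentially the smallest integer at which all of these numerical constants fit together; the remaining steps — continuity in $q$, annular counting, and the substitutions in (2)--(4) — are routine once Step~1 is in place.
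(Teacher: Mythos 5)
Your high-level strategy --- complete $\theta$ on each circle $\mathcal{C}_k$ to the full Jacobi theta function, bound the discarded tail, and run a Rouch\'e/continuity-in-$q$ argument to count one zero per annulus --- is the same as the paper's. But the step carrying all the difficulty, your Step~1 for complex $q$, is not proved and as stated is incorrect. Writing the $m$-th term of $F_k$ explicitly, it equals $q^{m(m+1)/2}(q^kz)^m$, whose phase contains the factor $e^{i\,m(m+1)\arg(q)/2}$, quadratic in $m$; this cannot be absorbed into the linear phase $e^{im\phi}$, so the decomposition $F_k=\Theta^*(|q|,\phi)-R_k(\phi)$ and the plan to invoke ``the real-$|q|$ lower bound on $|\Theta^*|$ at the worst-case phase'' fail whenever $\arg q\neq 0$. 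The paper's substitute is the Jacobi triple product $\Theta^*=QUR$ with $Q=\prod(1-q^m)$, $U=\prod(1+zq^m)$, $R=\prod(1+q^{m-1}/z)$, each factor bounded below termwise by quantities of the form $1-|q|^{a}$ or $|q|^{a}|z|-1$, so that a lower bound depending only on $|q|$ and $|z|$ survives for complex $q$ (Lemmas~\ref{LLL} and~\ref{lmU}). That is the missing ingredient, and you explicitly defer it as the ``main obstacle''; but that obstacle \emph{is} the theorem --- the continuity and annular-counting steps you do supply are indeed routine.

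Even for real $q$, your quantitative justification is only the leading-order comparison $(k+1)\epsilon>\pi$, which cannot yield the stated constant: it would suggest the result with $\alpha_0$ replaced by the larger value $1/\pi\approx 0.318$, whereas $\alpha_0=\sqrt{3}/(2\pi)\approx 0.2757$ arises precisely from optimizing the non-asymptotic bound $|\Theta^*|\ge e^{(\pi^2/3)+(-(2\pi^2/3)\alpha+1/(2\alpha))n}/2$ coming out of the triple-product estimates; Remark~(4) of Section~\ref{sectionremarks} explains that $1/\pi$ is only a conjectural ceiling for the constant, not something the error terms actually permit. Parts (2)--(4) inherit the gap. In addition, for part (3) the relevant threshold is $\sup_n (1-1/(\alpha_0(n-1)))^{-n-1/2}$, i.e.\ the maximal possible modulus of the \emph{smallest} separated zero $\xi_n$ over the bands $1-1/(\alpha_0(n-1))<|q|\le 1-1/(\alpha_0 n)$, not $\sup_q|q|^{-n_{\min}(q)+1/2}$ (the exponent $-n+1/2$ is the one used for the lower bound in part (2)); and part (4) in the paper is obtained by explicit evaluations of the products $Q,U,R$ at $|q|=1/2$, $n=3$, not by an asymptotic inequality supplemented by an unspecified numerical check.
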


The paper is organized as follows. Section~\ref{sectionremarks} contains 
some remarks about the spectrum of $\theta$. Section~\ref{sectionproofs} 
contains the proof of Theorem~\ref{tmsepar}. Section~\ref{NPR} contains 
some notation used in Section~\ref{sectionclose0}. The latter contains the 
formulation of Proposition~\ref{3svs} claiming the existence of certain 
spectral values of $q$ in the disk $\mathbb{D}_{1/2}$. Proposition~\ref{3svs} 
is proved in Section~\ref{pr3svs} while Lemmas~\ref{lmsepar} and 
\ref{lmthetazz} used in its proof are proved in Section~\ref{pr2lm}. 

{\bf Acknowledgement.} The author acknowledges the kind hospitality 
of the University of Stockholm and of Brunel University during the visits 
to which he had fruitful discussions with 
B.Z.~Shapiro, J.~Forsg{\aa}rd and I.~Krasikov. Electronic discussions 
with A.~Sokal, A.E.~Eremenko and A.~Folsom were also very helpful.

\section{Some remarks about the zeros and the spectrum 
of $\theta$\protect\label{sectionremarks}}

~~~~(1) Part (3) of the theorem is an improvement 
of the basic result of \cite{Ko6}. 
The latter states that, for any $0<|q|<1$ and for $|z|\geq 8^{11}=8589934592$, 
all zeros of $\theta$ are simple. 

If $q$ is real, i. e. $q\in (-1,0)\cup (0,1)$, then all coefficients of 
$\theta (q,.)$ are real and a priori $\theta$ can have only real zeros 
and/or pairs of complex conjugate ones. Part (3) of the theorem implies that 
the moduli of the latter are  $\leq 4.685636519\ldots \times 10^5$. 
\vspace{1mm}

(2) For any $q\in (0,1)$, the function $\theta (q,.)$ has infinitely-many real 
negative zeros (and no positive ones), and the double zeros if any are the 
rightmost negative ones. They are local minima for $\theta$. In \cite{Ko3} 
it is proved that, for $q\in (0,1)$, the 
real positive spectral values 
of $\theta$ have the following asymptotic presentation: 
$\tilde{q}_s=1-(\pi /2s)+o(1/s)$, where $0<\tilde{q}_1<\tilde{q}_2<\cdots <1$ 
(a more precise presentation is obtained 
in \cite{Ko10}). For any $\gamma \in (0,1)$ one can enumerate all but 
finitely-many of the zeros of $\theta (q,.)$, $q\in (0, \gamma )$, so that:

\begin{enumerate}
\item 
For $0<q<\gamma <\tilde{q}_1$, $\theta (q,.)$ 
has all zeros real, negative, distinct and enumerated in 
the decreasing order. For any fixed index $k$, the corresponding zero $\xi _k$ 
is continuous in $q$.

\item 
When $q=\tilde{q}_s$, the zeros 
$\xi _{2s-1}<0$ and $\xi _{2s}<0$ coalesce and then give birth to a 
complex conjugate pair, see part (2) of Theorem~1 of~\cite{Ko2}. 
For any fixed index $k\geq 2s+1$, the zero $\xi _k$ is continuous in 
$q$ for $q\in (0, \tilde{q}_{s+1})$.
\end{enumerate}

Confluence of two real zeros of $\theta$ takes place also at negative 
spectral values; the asymptotics of the moduli of the spectral values for 
$q\in (-1,0)$ is of the form $1-(\pi /8s)+o(1/s)$, see~\cite{Ko7}.  
For $s$ odd (resp. for $s$ even) $\theta$ has negative double zeros 
which are its local minima (resp. positive double zeros which are its local 
maxima). For any $q\in (-1,0)$ the function $\theta (q,.)$ has 
infinitely-many positive and infinitely-many negative zeros. 
The negative double zeros are the rightmost negative real zeros and 
the positive double zeros are the second from the left positive real zeros. 
\vspace{1mm}

(3) It is shown in \cite{Ko8} that the zeros of $\theta$ 
are expanded in Laurent 
series in the parameter $q$ of the form 

\begin{equation}\label{A}
\xi _k=-1/q^k+(-1)^kq^{k(k-1)/2}(1+\Phi _k(q))~,
\end{equation} 
where $\Phi _k$ is a Taylor series 
with integer coefficients (in \cite{Ko8} the zeros are denoted by $-\xi _k$). 
Stabilization properties of the coefficients of the series $\Phi _k$ are 
proved in \cite{Ko8} and \cite{Ko9}.
\vspace{1mm}

(4) Part (1) of Theorem~\ref{tmsepar} and parts (2) and (3) of the present 
remarks imply that the radius of convergence of the series $(\ref{A})$ is 
$\leq 1-(\pi /k)+o(1/k)$. Indeed, this radius equals the distance from $0$ 
to the nearest singularity of the right-hand side of $(\ref{A})$. 
This singularity is not further from $0$ than $\tilde{q}_{[(k+1)/2]}$ (where 
$[]$ stands for the integer part of), 
see part (2) 
of the present remarks. Hence a priori the statement of part (1) of 
Theorem~\ref{tmsepar} could be improved only by looking for an 
inequality of the form $|q|\leq 1-1/(\alpha _1n)$ with a constant 
$\alpha _1\in (\alpha _0,1/\pi ]$, but not of the form 
$|q|\leq 1-\tau (n)$ with $\tau =o(1/n)$. 
\vspace{1mm}  

(5) Consider for $k\geq 5$ 
the function $q^k\xi _k=-1+(-1)^kq^{k(k+1)/2}(1+\Phi _k(q))$. 
It is holomorphic for $|q|<1-1/(\alpha _0k)$ and continuous for 
$|q|\leq 1-1/(\alpha _0k)$. Hence $|q^k\xi _k|\leq |q|^{-1/2}$, see 
(\ref{cond1}). The maximum of 
this modulus is attained for $|q|=1-1/(\alpha _0k)$, therefore 
$|q^k\xi _k|\leq (1-1/(\alpha _0k))^{-1/2}$. By the Cauchy inequalities, if one 
sets $q^k\xi _k=\sum _{j=0}^{\infty}h_{k,j}q^j$, then one obtains the estimation 
$|h_{k,j}|\leq (1-1/(\alpha _0k))^{-j-1/2}$. 

(6) It is shown in \cite{Ko5} that for any fixed $q\in \mathbb{D}_1$, 
and for $k$ sufficiently large, the function $\theta$ 
has a zero close to $-q^{-k}$;  
these are all but finitely-many of the zeros of $\theta$. 
This result is complementary to the ones 
of parts (1) and (4) of Theorem~\ref{tmsepar}. 

(7) The function $\theta$ has no zeros for $|z|\leq 1/2|q|$ 
(hence no zero for $|z|\leq 1/2$), see \cite{Ko2}. 
On the other hand, the radius of the disk in the $z$-space centered at $0$ 
in which $\theta$ has no zeros for any $q\in \mathbb{D}_1$ is not larger than 
$0.5616599824\ldots$. Indeed, consider the series 
$\theta ^1:=\theta (\omega ,z)$, $\omega :=e^{3i\pi /4}$, for $|z|<1$. 
The sequence $\{ \omega ^{j(j+1)/2}\}$ being $8$-periodic the sum $\theta ^1$ 
equals 

$$(\sum _{j=0}^7\omega ^{j(j+1)/2}z^j)/(1-z^8)~.$$
The zero of least modulus of its numerator is a simple one and equals  

$$z_0:=0.337553312314574\ldots +i\, 0.448909453205253\ldots ~,~~{\rm with}~~
|z_0|=0.5616599824\ldots ~.$$ 
Hence for $\rho \in (0,1)$ sufficiently close to $1$ 
the function $\theta (\rho e^{3i\pi /4},.)$ has a zero close to $z_0$. This 
follows from the uniform convergence as $\rho \rightarrow 1^-$ of 
$\theta (\rho e^{3i\pi /4},z)$ to $\theta (e^{3i\pi /4},z)$ on any compact 
subdomain of the unit disk in the $z$-space.

\section{Proofs\protect\label{sectionproofs}}

\begin{proof}[Proof of Theorem~\ref{tmsepar}]
It is well-known that all zeros of the Jacobi theta function $\Theta$ 
are simple 
(see \cite{Wi} and Chapter X of \cite{StSh}), 
so this is also the case of the function 
$\Theta ^*(q,z)=\Theta (\sqrt{q},\sqrt{q}z)=
\sum _{j=-\infty}^{\infty}q^{j(j+1)/2}z^j$. 
The following property is known as the 
{\em Jacobi triple product} (see \cite{Wi}):

$$\Theta (q,z^2)=\prod _{m=1}^{\infty}(1-q^{2m})(1+z^2q^{2m-1})
(1+z^{-2}q^{2m-1})~.$$
It implies the identity 

\begin{equation}\label{Theta*}
\Theta ^*(q,z)=\prod _{m=1}^{\infty}(1-q^m)(1+zq^m)(1+q^{m-1}/z)~.
\end{equation}
Clearly the zeros of $\Theta ^*(q,z)$ are all the numbers $\mu _s:=-1/q^s$, 
$s\in \mathbb{Z}$. In what follows we set

$$\begin{array}{cclcccl}
Q&:=&\prod _{m=1}^{\infty}(1-q^m)&~~,~~&
U&:=&\prod _{m=1}^{\infty}(1+zq^m)\\ \\ 
R&:=&
\prod _{m=1}^{\infty}(1+q^{m-1}/z)&~~,~~&G&:=&
\sum _{j=-\infty}^{-1}q^{j(j+1)/2}z^j=\Theta ^*-\theta \end{array}$$
Thus $\Theta ^*=QUR$. Obviously, for $|z|\geq 2$ and $|q|<1$ one has 

\begin{equation}\label{G}
|G|\leq \sum _{j=1}^{\infty}|q|^{j(j-1)/2}/2^j\leq \sum _{j=1}^{\infty}1/2^j=1~.
\end{equation}
The following lemma is part of Lemma~4 in \cite{Ko6}:

\begin{lm}\label{LLL}
Suppose that $|q|\leq 1-1/b$, $b>1$, and $|z|>1$. 
Then $|Q|\geq e^{(\pi ^2/6)(1-b)}$ and $|R|\geq (1-1/|z|)e^{(\pi ^2/6)(1-b)}$. 
\end{lm}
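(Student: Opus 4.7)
The plan is to bound $|Q|$ and $|R|$ from below factor-by-factor, passing to absolute values and then controlling the resulting infinite product through its logarithm.

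For $Q$, I would start from the elementary inequality $|1-q^m|\geq 1-|q|^m>0$, so that $|Q|\geq\prod_{m=1}^{\infty}(1-|q|^m)$. Taking logarithms, expanding $\log(1-x)=-\sum_{k\geq 1}x^k/k$ termwise, and swapping the two positive sums yields
$$\log|Q|\;\geq\;-\sum_{k=1}^{\infty}\frac{|q|^k}{k(1-|q|^k)}.$$
The crucial step is the identity $1-|q|^k=(1-|q|)(1+|q|+\cdots+|q|^{k-1})$ combined with the elementary bound $1+|q|+\cdots+|q|^{k-1}\geq k|q|^{k-1}$, which gives $1-|q|^k\geq k(1-|q|)|q|^{k-1}$ and hence $|q|^k/(1-|q|^k)\leq |q|/[k(1-|q|)]$. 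Using the hypothesis $|q|\leq 1-1/b$, which forces $|q|/(1-|q|)\leq b-1$, each term of the series is then at most $(b-1)/k^2$. Summing gives $-\log|Q|\leq (b-1)\pi^2/6$, i.e., precisely $|Q|\geq e^{(\pi^2/6)(1-b)}$.

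For $R$ I would use $|1+q^{m-1}/z|\geq 1-|q|^{m-1}/|z|$. The factor at $m=1$ is exactly $1-1/|z|$, which is positive by the hypothesis $|z|>1$ and supplies the prefactor appearing in the statement. For $m\geq 2$, the same hypothesis $|z|>1$ yields $1-|q|^{m-1}/|z|\geq 1-|q|^{m-1}$, so the tail product is bounded below by $\prod_{k=1}^{\infty}(1-|q|^k)$, which has just been shown to exceed $e^{(\pi^2/6)(1-b)}$. Multiplying gives the desired estimate on $|R|$.

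The only genuine obstacle is locating the decomposition of $1-|q|^k$ that extracts a $\zeta(2)$-type saving: once the factor of $k$ is absorbed into the denominator via the AM-style bound $1+|q|+\cdots+|q|^{k-1}\geq k|q|^{k-1}$, the appearance of $\sum_{k\geq 1}1/k^2=\pi^2/6$ is automatic. Everything else is routine manipulation of moduli, and the estimate for $R$ reduces at once to the estimate for $Q$.
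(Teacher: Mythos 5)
Your proof is correct and follows essentially the same route as the paper's: pass to $\prod_{m\ge 1}(1-|q|^m)$, take logarithms, interchange the double sum to get $-\sum_k |q|^k/(k(1-|q|^k))$, and use $1+|q|+\cdots+|q|^{k-1}\ge k|q|^{k-1}$ together with $|q|/(1-|q|)\le b-1$ to produce the $\zeta(2)=\pi^2/6$ bound (the paper factors out $-|q|/(1-|q|)$ and bounds the remaining series by $\pi^2/6$, which is the same estimate arranged differently). The reduction of the bound on $|R|$ to the one on $|Q|$ via $|1+q^{m-1}/z|\ge 1-|q|^{m-1}/|z|>1-|q|^{m-1}$ for $m\ge 2$ is also exactly the paper's argument.
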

In particular, for $b=\alpha n$, $\alpha >0$, one obtains 

\begin{equation}\label{QR}
|Q|\geq e^{(\pi ^2/6)(1-\alpha n)}~~{\rm and}~~
|R|\geq (1-1/|z|)e^{(\pi ^2/6)(1-\alpha n)}~.
\end{equation}
If in addition $|z|\geq 2$, then $|R|\geq e^{(\pi ^2/6)(1-\alpha n)}/2$.

\begin{lm}\label{lmU}
Suppose that $|z|=|q|^{-n-1/2}$. Then 
$|U|\geq |q|^{-n^2/2}e^{-(\pi ^2/3)(\alpha n(\alpha n-1))^{1/2}}$.
\end{lm}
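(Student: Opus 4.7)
The strategy is to split $U=\prod_{m\geq 1}(1+zq^m)$ at $m=n$, exploiting that on the circumference $|z|=|q|^{-n-1/2}$ the modulus $|zq^m|=|q|^{m-n-1/2}$ exceeds $1$ exactly for $m\leq n$. The reverse triangle inequality then gives $|1+zq^m|\geq |zq^m|(1-|q|^{n-m+1/2})$ for $m\leq n$ and $|1+zq^m|\geq 1-|q|^{m-n-1/2}$ for $m\geq n+1$. The geometric factor $\prod_{m=1}^{n}|zq^m|=|z|^{n}|q|^{n(n+1)/2}$ collapses to $|q|^{-n^2/2}$, which is the dominant piece of the target. Reindexing the remaining factors by half-odd-integer exponents, the residual product is $\prod_{j\in\{1/2,3/2,\ldots,n-1/2\}}(1-|q|^j)\cdot\prod_{j\in\{1/2,3/2,\ldots\}}(1-|q|^j)$. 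Each factor lies in $(0,1)$ and the finite index set sits inside the infinite one, so this is bounded below by $\bigl[\prod_{\nu\geq 1}(1-|q|^{\nu-1/2})\bigr]^{2}$. The task reduces to the estimate
\[
-\sum_{\nu\geq 1}\log(1-|q|^{\nu-1/2})\leq (\pi^{2}/6)\sqrt{\alpha n(\alpha n-1)}.
\]

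Setting $t=\log(1/|q|)$ and interchanging the Taylor series of $-\log(1-x)$ with the sum over $\nu$ yields the identity $-\sum_{\nu\geq 1}\log(1-|q|^{\nu-1/2})=\sum_{k\geq 1}1/(2k\sinh(kt/2))$. Since $x\mapsto -\log(1-e^{-tx})$ is convex and decreasing, the midpoint form of the Hermite--Hadamard inequality delivers the crude bound $\leq \int_{0}^{\infty}(-\log(1-e^{-tx}))\,dx=\pi^{2}/(6t)$. Using $|q|\leq 1-1/(\alpha n)$, i.e.\ $t\geq -\log(1-1/(\alpha n))$, a direct Taylor expansion shows that $1/t$ in the worst case is just strictly larger than $\sqrt{\alpha n(\alpha n-1)}$, so this first-order estimate overshoots the target by a small constant amount.

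The main obstacle is closing that residual gap. My plan is to exploit the sharper inequality $\sinh(x)\geq x+x^{3}/6$, which via the partial-fraction identity $\frac{1}{k^{2}(1+k^{2}t^{2}/24)}=\frac{1}{k^{2}}-\frac{t^{2}}{24+k^{2}t^{2}}$ strengthens the estimate to
\[
\sum_{k\geq 1}\frac{1}{2k\sinh(kt/2)}\leq \frac{\pi^{2}}{6t}-t\sum_{k\geq 1}\frac{1}{24+k^{2}t^{2}}.
\]
The subtracted term is evaluated by the classical identity $\sum_{k\geq 1}1/(k^{2}+a^{2})=\pi\coth(\pi a)/(2a)-1/(2a^{2})$ (with $a=2\sqrt 6/t$), producing a correction of size $\pi/(4\sqrt 6)-O(t)$. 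For $\alpha n\geq 5\alpha_{0}\approx 1.38$ this is enough to land below $(\pi^{2}/6)\sqrt{\alpha n(\alpha n-1)}$, reducing the proof to a one-variable calculus check in $\beta=\alpha n$. Conceptually, the factor $\sqrt{\alpha n(\alpha n-1)}$ reflects the identity $\prod_{\nu\geq 1}(1-|q|^{\nu-1/2})=Q'(|q|^{1/2})/Q'(|q|)$ with $Q'(r):=\prod_{m}(1-r^{m})$: the value $1/(1-|q|^{1/2})$ rationalises to exactly $\alpha n+\sqrt{\alpha n(\alpha n-1)}$ at the worst-case $|q|$, which is the natural $b$ for a Lemma~\ref{LLL}-style estimate applied to $Q'(|q|^{1/2})$.
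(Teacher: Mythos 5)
Your decomposition of $U$ is exactly the paper's: split at $m=n$, pull out $\prod_{m=1}^n|zq^m|=|q|^{-n^2/2}$, reindex, and reduce to $|U|\geq |q|^{-n^2/2}P^2$ with $P=\prod_{m\geq 1}(1-|q|^{m-1/2})$. Where you diverge is in bounding $-\ln P$, and there your route is genuinely different and noticeably heavier. The paper expands $\ln P$ into the double series, factors out $-|q|^{1/2}/(1-|q|)$, and uses the elementary inequality $1+|q|+\cdots+|q|^s\geq (s+1)|q|^{s/2}$ (your $\sinh(ks)\geq k\sinh(s)$ in disguise) to get $-\ln P\leq (\pi^2/6)\,|q|^{1/2}/(1-|q|)$; the point is that $|q|^{1/2}/(1-|q|)$ evaluated at $|q|=1-1/(\alpha n)$ is \emph{exactly} $(\alpha n(\alpha n-1))^{1/2}$, so there is no slack to recover and the lemma follows for every $\alpha n>1$. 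Your comparison constant $1/t=1/\ln(1/|q|)$ exceeds $|q|^{1/2}/(1-|q|)=1/(2\sinh(t/2))$, which is precisely why your first-order Hermite--Hadamard bound overshoots by $\approx \pi^2/(144\,\alpha n)$ and forces the $\sinh x\geq x+x^3/6$ correction with the $\coth$ summation formula. That correction does work: the term you gain, $\pi\coth(2\sqrt6\pi/t)/(4\sqrt6)-t/48\geq 0.29$ at the worst case $t=\ln(\beta/(\beta-1))$, $\beta=5\alpha_0$, comfortably dominates the deficit $\approx 0.085$ there, and the termwise-decreasing-in-$t$ structure of your upper bound legitimately reduces everything to the boundary value of $|q|$. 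Two caveats: the final ``one-variable calculus check'' is asserted rather than carried out (it does go through, but it is the only place where the claim could fail and should be written down); and your argument needs $\alpha n$ bounded away from $1$ (it fails numerically for $\alpha n\approx 1.001$), which is harmless for the application with $\alpha=\alpha_0$, $n\geq 5$, but makes your version of the lemma strictly weaker than the paper's. In short: correct modulo the deferred verification, but the paper's one-line algebraic identity replaces your entire second-order analysis.
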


\begin{proof}
It follows from the definition of $U$ that 
\begin{equation}\label{U}
\begin{array}{ccl}
|U|&\geq&\prod _{m=1}^{n}(|q|^{-n-1/2+m}-1)
\prod_{m=1}^{\infty}(1-|q|^{m-1/2})\\ \\ &=&
|q|^{-n^2/2}\prod_{m=1}^{n}(1-|q|^{m-1/2})
\prod_{m=1}^{\infty}(1-|q|^{m-1/2})\\ \\&\geq &|q|^{-n^2/2}
(\prod_{m=1}^{\infty}(1-|q|^{m-1/2}))^2~.\end{array}
\end{equation}
Set $P:=\prod_{m=1}^{\infty}(1-|q|^{m-1/2})$, hence $|U|\geq |q|^{-n^2/2}P^2$. 
Taking logarithms one obtains 

$$\begin{array}{rcl}
\ln P&=&-\sum _{\nu =1}^{\infty}|q|^{\nu -1/2}
-(1/2)\sum _{\nu =1}^{\infty}|q|^{2\nu -1}-
(1/3)\sum _{\nu =1}^{\infty}|q|^{3\nu -3/2}-\cdots \\ \\ 
&=&-|q|^{1/2}/(1-|q|)-|q|/2(1-|q|^2)-|q|^{3/2}/3(1-|q|^3)-\cdots \\ \\ 
&=&(-|q|^{1/2}/(1-|q|))T~~,\\ \\ 
{\rm where}~~T&=&1+|q|^{1/2}/2(1+|q|)+|q|/3(1+|q|+|q|^2)+\cdots~.\end{array}$$
It is clear that $|q|^{s/2}/(s+1)(1+|q|+\cdots +|q|^s)<1/(s+1)^2$ (because 
$|q|^r+|q|^{s-r}\geq 2|q|^{s/2}$, $r=0$, $\ldots$, $[s/2]$). Recall that 
$\sum _{s=0}^{\infty}1/(s+1)^2=\pi ^2/6=1.6449\ldots$. Hence $T\in (0,\pi ^2/6)$ 
and $P\geq e^{-(\pi ^2/6)|q|^{1/2}/(1-|q|)}$. Fix $\alpha >0$. 
For $|q|\leq 1-1/\alpha n$ this implies 
$P\geq e^{-(\pi ^2/6)(\alpha n(\alpha n-1))^{1/2}}$ from which the lemma follows.
\end{proof} 

For 
$|q|\leq 1-1/\alpha n$ the minoration 
$|q|^{-n^2/2}e^{-(\pi ^2/3)(\alpha n(\alpha n-1))^{1/2}}$ of $|U|$ 
is not less than 
$(1-1/\alpha n)^{-n^2/2}e^{-(\pi ^2/3)(\alpha n(\alpha n-1))^{1/2}}$. The quantity 
$(1-1/\alpha n)^{-n}$ is decreasing as $n$ is increasing; it tends to 
$e^{1/\alpha}$ as $n$ tends to infinity. Therefore 
$(1-1/\alpha n)^{-n^2/2}\geq e^{n/(2\alpha )}$ and 
$|U|\geq e^{n/(2\alpha )}e^{-(\pi ^2/3)(\alpha n(\alpha n-1))^{1/2}}\geq 
e^{n/2\alpha -(\pi ^2/3)\alpha n}$. 
Thus one obtains the estimation (see formula (\ref{Theta*}), 
conditions (\ref{QR}), the line that 
follows them and conditions (\ref{U}))

$$\begin{array}{ccccl}
|\Theta ^*|&=&|Q||U||R|&\geq&e^{(\pi ^2/6)(1-\alpha n)}e^{n/2\alpha -(\pi ^2/3)\alpha n}
(e^{(\pi ^2/6)(1-\alpha n)}/2)\\ \\ &&&=&e^{(\pi ^2/3)+
(-(2\pi ^2/3)\alpha +1/(2\alpha ))n}/2~.
\end{array}$$
For $\alpha =\alpha _0$ one gets $|\Theta ^*|\geq e^{\pi ^2/3}/2$. 
Recall that $\theta =\Theta ^*-G$. For the restrictions $\theta _k$, 
$\Theta ^*_k$ and $G_k$ of these functions to the circumference 
$\mathcal{C}_k$ one has $|\Theta ^*_k|\geq e^{\pi ^2/3}/2$ and 
$|G_k|\leq 1$, therefore 
for $t\in [0,1]$ one has $|\Theta ^*_k-tG_k|\geq (e^{\pi ^2/3}/2)-1>0$. 
This means that 
no zero of the function $\Theta ^*_k-tG_k$ crosses the circumference 
$\mathcal{C}_k$ as $t$ increases from $0$ to $1$. This is true for any 
$k\geq n$. Thus to prove part (1) of the theorem there remains to lift the 
condition $|z|\geq 2$ which was used to obtain the estimation 
$|R|\geq e^{(\pi ^2/6)(1-\alpha n)}/2$. 

Suppose that $k\geq n\geq 5$ and $|q|\leq 1-1/(\alpha _0n)$. Then 

$$|\xi _k|\geq (1-1/(n\alpha _0))^{-n+1/2}\geq (1-1/(5\alpha _0))^{-9/2}=
336.2792102\ldots ~.$$ 
(We use the fact that the function $(1-1/(x\alpha _0))^{-x+1/2}$ is 
decreasing for $x\geq 5$.) This proves part (2) of the theorem and also 
lifts the restriction $|z|\geq 2$. Now part (1) of the theorem is completely 
proved.

Suppose that $1-1/(\alpha _0(n-1))<|q|\leq 1-1/(\alpha _0n)$. The zero 
$\xi _n$ of $\theta$ is the one of smallest modulus among its zeros 
strongly separated 
in modulus which are mentioned in parts (1) and (2) of the theorem. 
One has 

$$|\xi _n|\leq |q|^{-n-1/2}<(1-1/(\alpha _0(n-1)))^{-n-1/2}~.$$ 
The right-hand side is maximal for $n=5$; 
the corresponding value is $4.685636519\ldots \times 10^5$. 
This proves part (3) of the theorem.


To prove part (4) of the theorem we perform the same reasoning as above yet 
we use more accurate 
inequalities. In particular, we consider $|z|$ to be not less than 
$|q|^{-n-1/2}$ which for $n\geq 3$ and $|q|\leq 1/2$ implies 
$|z|\geq d_0:=11.31370850\ldots$. This allows to make the estimations 

$$\begin{array}{cclccclc}
|R|&\geq &\prod _{m=1}^{\infty}(1-2^{1-m}/d_0)&=:&r_0&=&
0.8333799934\ldots&,\\ \\ 
P&\geq &\prod_{m=1}^{\infty}(1-2^{1/2-m})&=:&p_0&=&
0.1298980722\ldots&{\rm and}\\ \\ 
|Q|&\geq &\prod _{m=1}^{\infty}(1-2^{-m})&=:&q_0&=&0.2887880952\ldots ~.
\end{array}$$
Recall that (see (\ref{U})) $|U|\geq |q|^{-n^2/2}\prod_{m=1}^{n}(1-|q|^{m-1/2})P$.
For $0<|q|\leq 1/2$ and $n\geq 3$ this product is minimal for $|q|=1/2$ and 
$n=3$ when it equals 

$$u_0:=2^{9/2}\times 0.1558689591\ldots \times p_0=0.4581390612\ldots ~.$$
Thus one obtains the estimation 

$$|\Theta ^*|\geq |R||Q||U|\geq r_0q_0u_0=0.1102604290\ldots ~.$$
On the other hand one has 

$$|G|\leq \sum _{j=1}^{\infty}d_0^{-j}(1/2)^{j(j-1)/2}=0.09213257671\ldots <
0.1102604290\ldots ~.$$
As in the proof of Theorem~\ref{tmsepar} one concludes that for 
$0<|q|\leq 1/2$ strong separation of the zeros 
of $\theta$ occurs for $k\geq 4$.

\end{proof}

\section{Notation and preliminary remarks\protect\label{NPR}}

In this section we fix some notation which is to be used in next section. 
We set $\rho :=0.4353184958$, $\tau :=0.1230440086$. Observe that 
these are rational numbers; for infinite 
decimal fractions we write 
$0.4353184958\ldots$ and $0.1230440086\ldots$. 
We set $\varepsilon :=2\times 10^{-10}$ and we denote by 
$U\subset \mathbb{C}$ the rectangle (in the $q$-space) 

$$\begin{array}{clcrcclcr}
U:=\{~{\rm Re}q&\in [~~~\rho -\varepsilon &,&\rho +\varepsilon ~~~]&,&
{\rm Im}q&\in  
[~~~\tau -\varepsilon &,&\tau +\varepsilon ~~~]~\} \\&= [0.4353184956&,&
0.4353184960]&&&= [0.1230440084&,&0.1230440088]
\end{array}$$ 
We define the rectangle $V\subset \mathbb{C}$ (in the $z$-space) as the set 

$$V:=\{ ~{\rm Re}z\in [-5.961,-5.965]~,~{\rm Im}z\in [6.102,6.106]~\} ~.$$
We denote its vertices as follows:

$$A=(-5.965,6.102)~~,~~B=(-5.961,6.102)~~,~~C=(-5.961,6.106)~~,~~
D=(-5.965,6.106)~.$$
By abuse of language we denote by $A$ also the complex number 
$-5.965+i\, 6.102$ etc. 

In what follows we consider several functions which are defined after the 
function $\theta$. The subscripts $z$ and $q$ mean partial derivations, 
e.~g. $\theta _z:=\partial \theta /\partial z$,
$\theta _{qz}:=\partial ^2\theta /\partial q\partial z$ etc. Thus 
\begin{equation}\label{thetaqz}
\theta _{qz}=1+6q^2z+18q^5z^2+40q^9z^3+
75q^{14}z^4+126q^{20}z^5+196q^{27}z^6+\cdots ~.
\end{equation} 

The subscript $(k)$ means truncation, i. e. 
$\theta _{(k)}:=\sum _{j=0}^kq^{j(j+1)/2}z^j$. 

We set $\theta ^*:=(1/2q^3)\theta _{zz}$. It is clear that 

\begin{equation}\label{thetazz}
\theta ^*=1+3q^3z+6q^7z^2+10q^{12}z^3+
15q^{18}z^4+21q^{25}z^5+28q^{33}z^6+\cdots ~.
\end{equation} 
 
For $5\leq j\leq n\leq \infty$ 
we set $r_j\in [0,1]$, $r:=(r_5,r_6,\ldots ,r_n)$ if $n<\infty$ or 
$r:=(r_5,r_6,\ldots )$ if not. We define the family of functions  

$$\theta _r(q,z):=1+qz+q^3z^2+q^6z^3+q^{10}z^4+
\sum _{j=5}^nr_jq^{j(j+1)/2}z^j~.$$

For a function $f(q,z)$ defined on $U\times V$ 
we denote by $DR[f]$ (resp. $DI[f]$) the maximal 
possible absolute value of the difference between the values of 
Re$f$ (resp. Im$f$) at two 
different points of $U\times V$. Obviously, for two functions $f(q,z)$ 
and $g(q,z)$ one has 

\begin{equation}\label{DR1ineq}
DR[f+g]\leq DR[f]+DR[g]~~{\rm and}~~DI[f+g]\leq DI[f]+DI[g]~.
\end{equation}

\section{The spectral values closest to $0$
\protect\label{sectionclose0}}

In the present section we consider the restriction of $\theta$ to the disk 
$\mathbb{D}_{1/2}$. We prove in the next section the following 

\begin{prop}\label{3svs}
For $0<|q|\leq 1/2$ the function $\theta$ has at least three spectral values 
which equal

$$\tilde{q}_1:=0.3092493386\ldots ~~{\rm and}~~
v_{\pm}:=0.4353184958\ldots \pm i\, 
0.1230440086\ldots ~.$$
The function $\theta (\tilde{q}_1,.)$ has a double real negative zero 
$-7.5032559833\ldots$, 
all its other zeros are real, negative, simple and $<-8$. The function 
$\theta (v_{\pm},.)$ has a simple zero $-3.277\ldots \mp i\, 1.483\ldots$, a 
double zero $-5.963\ldots \pm i\, 6.104\ldots$, its other zeros are simple 
and of modulus larger than $|v_{\pm}|^{-7/2}=16.06050040\ldots$, see 
part (4) of Theorem~\ref{tmsepar}.
\end{prop}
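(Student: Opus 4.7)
The strategy is to handle $\tilde{q}_1$ separately (it is well documented in the literature) and then to pin down the new spectral values $v_{\pm}$ by a rigorous enclosure argument built on the rectangles $U\subset\mathbb{C}_q$ and $V\subset\mathbb{C}_z$ and the variation bounds $DR[\cdot]$, $DI[\cdot]$ introduced in Section~\ref{NPR}. The value $\tilde{q}_1$ is the smallest positive spectral value studied in \cite{KaLoVi} and \cite{KoSh}; the description of its remaining zeros is immediate from part~(4) of Theorem~\ref{tmsepar}, which, applied at $|q|=\tilde{q}_1<1/2$, shows that every zero $\xi_k$ with $k\geq 4$ is simple and has $|\xi_k|\geq \tilde{q}_1^{-7/2}>20$. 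Hence only the three zeros of smallest modulus must be inspected, and a direct computation with a moderate truncation $\theta_{(k)}$ confirms they are the claimed real simple zeros $<-8$ together with the double zero $-7.5032559833\ldots$.

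For $v_{+}$ (the statement for $v_{-}$ will then follow by complex conjugation, since $\theta$ has real coefficients in $q$ and $z$), the plan is to exhibit a common zero of $\theta$ and $\theta_z$ in $U\times V$. I would introduce the holomorphic map
\[
\Phi\colon U\times V\to\mathbb{C}^{2},\qquad \Phi(q,z):=\bigl(\theta(q,z),\,\theta_{z}(q,z)\bigr),
\]
and detect a zero of $\Phi$ by a two-dimensional degree/argument-principle argument. The first preparatory step is to reduce to polynomials: using the family $\theta_r$ and the rapid decay of $|q^{j(j+1)/2}z^{j}|$ valid for $q\in U$ (so $|q|<0.44$) and $z\in V$ (so $|z|<8.5$), the tails $\theta-\theta_{(k)}$ and $\theta_{z}-(\theta_{(k)})_{z}$ are uniformly tiny on $U\times V$ once $k$ is moderately large. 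Lemmas~\ref{lmsepar} and~\ref{lmthetazz} are presumably the quantitative vehicles for this tail estimate and for the analogous bound on $\theta^{*}=(1/2q^{3})\theta_{zz}$.

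The second step localizes the double zero. Part~(4) of Theorem~\ref{tmsepar} tells us that, for $q\in U$, every zero of $\theta(q,\cdot)$ of modulus exceeding $|v_{+}|^{-7/2}=16.06050040\ldots$ is simple, so any multiple zero must lie in the closed disk of radius $16.06\ldots$ about the origin. A Rouché computation, applied to the truncated polynomial and controlled by $DR$, $DI$, produces the simple zero near $-3.28-i\,1.48$ inside a small disk disjoint from $V$, leaving $V$ as the only remaining region in which a multiple zero can occur. An interval-arithmetic evaluation of $\theta_{(k)}$ and $(\theta_{(k)})_{z}$ at the centre of $U\times V$, padded by the $DR$, $DI$ variation bounds and the polynomial tail bound, then shows that the image of $\partial V$ under $\theta(q,\cdot)$ winds once around $0$ for every $q\in U$; this yields a holomorphic zero $z(q)\in V$ of $\theta(q,\cdot)$, and a second winding computation as $q$ traverses $\partial U$ forces $\theta_{z}(q,z(q))$ to vanish for some $q\in U$. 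That the resulting zero of $\theta$ is exactly a double zero follows by checking $\theta^{*}\neq 0$ at the located point using Lemma~\ref{lmthetazz}, and the uniqueness of $v_{+}$ inside $U$ follows from nonvanishing of the Jacobian of $\Phi$ at that point.

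The main obstacle is purely numerical: the parameter $\varepsilon=2\times 10^{-10}$ fixed in Section~\ref{NPR} is so small that the truncated polynomial evaluations and the variation bounds $DR$, $DI$ must be carried out to very high precision, and the truncation degree $k$ must be chosen large enough for the tail to be swallowed by the very small values that $|\theta|$ and $|\theta_{z}|$ attain near the expected double zero. Everything else—the interpretation in terms of the argument principle, the separation of the simple zero near $-3.28-i\,1.48$ from the candidate double zero in $V$, and the deduction of $v_{-}$ and its double zero from $v_{+}$—is conceptually routine once these enclosures are established.
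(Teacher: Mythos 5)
Your overall strategy---locate a common zero of $\theta$ and $\theta_z$ inside $U\times V$ by a quantitative enclosure, treat $\tilde q_1$ by citation, and get $v_-$ from $v_+$ by conjugation---is the same as the paper's, and your reading of part (4) of Theorem~\ref{tmsepar} and of the role of $V$ is correct. But the central mechanism you describe is internally inconsistent. You propose to show that $\theta(q,\cdot)$ winds \emph{once} around $0$ on $\partial V$ for every $q\in U$, obtain a holomorphic zero $z(q)\in V$, and then force $\theta_z(q,z(q))$ to vanish by a winding computation over $\partial U$. If the winding number of $\theta(q,\cdot)$ on $\partial V$ were $1$ for all $q\in U$, the unique zero in $V$ would be simple for \emph{every} $q\in U$, and no double zero could exist there; conversely, at the sought spectral value the winding number is $2$ and $z(q)$ ceases to be single-valued (it has a branch point exactly at $v_+$), so the function $q\mapsto\theta_z(q,z(q))$ you want to apply the argument principle to is not well defined on all of $U$. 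The roles must be swapped: one tracks the zero $\eta(q)$ of $\theta_z(q,\cdot)$, which \emph{is} a well-defined holomorphic function on $U$ because $\theta_{zz}=2q^3\theta^*$ does not vanish on $U\times V$ (part (1) of Lemma~\ref{lmthetazz}), and then shows that $q\mapsto\theta(q,\eta(q))$ has a zero in $U$. You also need $\theta_q\neq 0$ along the graph of $\eta$, which the paper extracts from the identity $2q\theta_q=z^2\theta_{zz}+2z\theta_z$; nothing in your outline supplies this.

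Even after that repair, your plan leaves the decisive quantitative step unexplained: why should $\theta(q,\eta(q))$ (or, in your version, the map on $\partial U$) have nonzero winding? With $\varepsilon=2\times 10^{-10}$ the rectangle $U$ is so small that a boundary winding computation is numerically hopeless without an a priori reason for the zero to lie inside. The paper instead starts from an explicit numerical point $(q_a,z_a)$ at which $|\theta|$ and $|\theta_z|$ are of order $10^{-15}$, flows first in $z$ along $\dot z=-1/\theta_{zz}$ to reach a point $z^*$ with $\theta_z(q_a,z^*)=0$ exactly (displacement $\lesssim 5\times 10^{-15}$ by the lower bound on $|\theta_{zz}|$), and then flows along the curve $\{\theta_z=0\}$ with the vector field $dq/d\theta=1/\theta_q$ until $\theta=0$, using the bounds of Lemma~\ref{lmthetazz} on $\theta^*$ and $\theta_{qz}$ to get $|q^\dagger-q_a|<10^{-10}<\varepsilon$, hence $q^\dagger\in U$. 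Finally, note that Lemma~\ref{lmsepar} is not a tail estimate as you guessed: it is a Rouch\'e-type statement that $\theta_r\neq 0$ on the circle $|z|=|q|^{-2}$, which is what separates the simple zero near $-3.28-i\,1.48$ from the two zeros (or one double zero) lying in the annulus $|q|^{-2}\leq|z|\leq|q|^{-7/2}$.
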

In the present section we present a hint why the following conjecture 
should be true:

\begin{conj}\label{conjclose0}
The spectral values $\tilde{q}_1$ and $v_{\pm}$ are the only 
spectral values of $\theta$ for $|q|\leq 1/2$. 
\end{conj}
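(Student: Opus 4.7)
The key observation is that part~(4) of Theorem~\ref{tmsepar} forces every multiple zero of $\theta(q,\cdot)$ with $|q|\le 1/2$ to lie strictly inside the circle $\mathcal{C}_3:=\{|z|=|q|^{-7/2}\}$, since every zero outside is simple and strongly separated. Moreover, the inequalities used in the proof of part~(4) (namely $|\Theta^*|\ge 0.1102\ldots$ and $|G|\le 0.0921\ldots$ on $|z|=|q|^{-7/2}$) show that $\theta$ has no zero on $\mathcal{C}_3$ itself for any $q\in\overline{\mathbb{D}_{1/2}}$. By the argument principle in $z$, the number of zeros of $\theta(q,\cdot)$ inside $\mathcal{C}_3$ is then a locally constant (hence constant) function of $q$ on $\overline{\mathbb{D}_{1/2}}$; since for $|q|$ small the zeros are close to $-q^{-k}$ and exactly those with $k=1,2,3$ fall inside $\mathcal{C}_3$, this count equals exactly~$3$ throughout $\overline{\mathbb{D}_{1/2}}$.

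The plan is to track these three zeros as a multiset via the monic cubic
$$P(q,w):=(w-\xi_1(q))(w-\xi_2(q))(w-\xi_3(q))=w^3+a(q)w^2+b(q)w+c(q).$$
Its coefficients are holomorphic on a neighborhood of $\overline{\mathbb{D}_{1/2}}$ because the power sums
$$p_k(q):=\frac{1}{2\pi i}\oint_{|z|=|q|^{-7/2}} z^k\,\frac{\theta_z(q,z)}{\theta(q,z)}\,dz,\qquad k=1,2,3,$$
are well defined and holomorphic in~$q$ (the integrand has no pole on the contour), and $a,b,c$ are recovered from $p_1,p_2,p_3$ by Newton's identities. The spectral values in $\overline{\mathbb{D}_{1/2}}$ are then precisely the zeros of the scalar holomorphic function $\Delta(q):=\mathrm{disc}_w P(q,w)$, so Conjecture~\ref{conjclose0} is equivalent to the assertion that $\Delta$ has exactly three zeros (counted with multiplicity) in $\overline{\mathbb{D}_{1/2}}$.

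Since Proposition~\ref{3svs} already exhibits three distinct zeros $\tilde{q}_1,v_+,v_-$ of~$\Delta$, it suffices to prove $\Delta$ has at most three. I would do this via the argument principle,
$$N=\frac{1}{2\pi i}\oint_{|q|=1/2}\frac{\Delta'(q)}{\Delta(q)}\,dq,$$
aiming to certify $N=3$, which as a by-product would force each of $\tilde{q}_1,v_\pm$ to be a simple zero of~$\Delta$ (consistent with the single double zero described in Proposition~\ref{3svs} at each of these parameter values).

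The main obstacle is making the winding-number computation rigorous. Since $\Delta$ is assembled from contour integrals of $\theta_z/\theta$, whose numerator and denominator are infinite series in both $q$ and $z$, a certified argument-principle computation requires: (i) truncating $\theta(q,z)$ to a polynomial $\theta_{(M)}(q,z)$ of suitably large degree $M$ with uniform tail bounds for $\theta$ and $\theta_z$ on both $|q|=1/2$ and $|z|=|q|^{-7/2}$; (ii) a uniform lower bound $|\theta(q,z)|\ge c>0$ on the $z$-contour (available because $\theta$ has no zero on $\mathcal{C}_3$ and $\overline{\mathbb{D}_{1/2}}$ is compact); (iii) ball/interval arithmetic on a sufficiently fine partition of $|q|=1/2$ certifying that $\Delta(q)$ is bounded away from $0$ on that circle and that $\arg\Delta$ executes exactly three full turns. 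The author's numerical hint suggests (iii) holds with room to spare, but converting that experimental evidence into a rigorous bound is precisely the computer-assisted step that remains and that the present paper does not carry out.
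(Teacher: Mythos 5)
You should first be aware that the statement you are proving is labelled a \emph{Conjecture} in the paper, and the author offers only a ``hint of a proof'': he truncates $\theta$ to the polynomials $\theta_{(s)}$, $s=13,\dots,24$, computes the resultants $\mathrm{Res}(\theta_{(s)},\partial\theta_{(s)}/\partial z,z)$, observes that their roots in $\overline{\mathbb{D}_{1/2}}$ stabilize to $\tilde q_1$ and $v_\pm$, and combines this with the rigorous facts that $\tilde q_1$ is the only spectral value in $\mathbb{D}_{0.31}$ (from \cite{Ko11}) and that for $0.31\le|q|\le 0.5$ any multiple zero has modulus $\le 0.31^{-3.5}$, where the tail $\theta-\theta_{(18)}$ is of order $10^{-24}$. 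He explicitly states that this ``does not provide a rigorous proof.'' Your route is genuinely different: instead of truncation and resultants you encapsulate the three non-separated zeros $\xi_1,\xi_2,\xi_3$ (which part (4) of Theorem~\ref{tmsepar} confines inside $\mathcal{C}_3$, on which $|\theta|\ge 0.110\ldots-0.092\ldots>0$) into a monic cubic via contour-integral power sums, and reduce the conjecture to counting the zeros of a single scalar function $\Delta=\mathrm{disc}_w P$ by the argument principle on $|q|=1/2$. This is conceptually cleaner -- it works with $\theta$ itself rather than truncations, and avoids spurious resultant roots -- at the price of a heavier certified double-contour quadrature. Like the paper, you stop short of the decisive certified computation, so neither argument settles the conjecture.

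There is, however, one concrete error in your setup. The coefficients $a,b,c$, and hence $\Delta$, are \emph{not} holomorphic on a neighborhood of $\overline{\mathbb{D}_{1/2}}$: they are holomorphic only on the punctured disk and have poles at $q=0$, because by formula (\ref{A}) the enclosed zeros behave like $\xi_k\sim -q^{-k}$, so that $c=-\xi_1\xi_2\xi_3\sim q^{-6}$ and $\Delta=\prod_{i<j}(\xi_i-\xi_j)^2\sim q^{-4}\cdot q^{-6}\cdot q^{-6}=q^{-16}$. Consequently your integral $\frac{1}{2\pi i}\oint_{|q|=1/2}\Delta'/\Delta\,dq$ computes zeros minus poles and should be certified to equal $3-16=-13$, not $3$; equivalently you should work with $q^{16}\Delta(q)$, which extends holomorphically across $q=0$ with a nonzero value there, and certify that its winding number is $3$. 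With that correction (and the observation that $\theta\neq 0$ on $\mathcal{C}_3$ persists for $|q|$ slightly above $1/2$, so the relevant function is defined on a genuine neighborhood of the circle $|q|=1/2$), your reduction of the conjecture to a single certifiable winding-number computation is sound, but the computation itself remains to be carried out, exactly as the computational step remains uncarried-out in the paper.
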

(These spectral numbers are mentioned in the lectures of A.~Sokal.) 

\begin{proof}[Hint of a proof]
One can approximate $\theta$ by its truncations 
$\theta _{(s)}:=\sum _{j=0}^sq^{j(j+1)/2}z^j$. We consider $\theta _{(s)}$ 
as a degree 
$s$ polynomial in $z$. The values of $q$ for which the latter  
has multiple zeros are the values for which one has 
Res$(\theta _{(s)},\partial \theta _{(s)}/\partial z,z)=0$. 

For $|q|\leq 1/2$, the truncations with $s=13$, $\ldots$, $24$  
have multiple zeros 
for $q\approx \tilde{q}_1$, for $q\approx v_{\pm}$ and for no other 
value of $q$. Up to the $10$th 
decimal, these values of $q$ are the same for $s=13$, $\ldots$, $24$. 
This fact makes 
Conjecture~\ref{conjclose0} plausible, 
but does not provide a rigorous proof of it. 

It is proved in \cite{Ko11} that $\tilde{q}_1$ is 
the only spectral value of $\theta$ belonging to $\mathbb{D}_{0.31}$. 

Part (4) of Theorem~\ref{tmsepar} implies that for $0.31\leq |q|\leq 0.5$ 
the multiple zeros of $\theta$ (if any) have a modulus 
$\leq 0.31^{-3.5}=60.28844350\ldots$. 
The first term of the series $\theta -\theta _{(18)}$ equals $q^{190}z^{19}$. 
For $|q|=0.5$ and $|z|=0.31^{-3.5}$ its modulus equals 
$4.253517108\ldots \times 10^{-24}$. Therefore one expects that 
the truncation $\theta _{(18)}$ provides sufficient accuracy in the 
computation of the 
three spectral values of $\theta$ closest to $0$. 
\end{proof}

\begin{rems}
{\rm (1) The approximations up to the $6$th decimal of the 
first $25$ real positive spectral values are equal to (see \cite{KoSh})

$$\begin{array}{lllll}
0.309249~,~&0.516959~,~&0.630628~,~&0.701265~,~&0.749269~,\\ 
0.783984~,~&0.810251~,~&0.830816~,~&0.847353~,~&0.860942~,\\
0.872305~,~&0.881949~,~&0.890237~,~&0.897435~,~&0.903747~,\\ 
0.909325~,~&0.914291~,~&0.918741~,~&0.922751~,~&0.926384~,\\ 
0.929689~,~&0.932711~,~&0.935482~,~&0.938035~,~&0.940393~.\end{array}$$ 

(2) The approximations up to the $6$th decimal of the moduli 
of the first $8$ negative 
spectral values are equal to (see \cite{Ko7}) 

$$
0.727133~,~0.783742~,~0.841601~,~0.861257~,~ 
0.887952~,~0.897904~,~0.913191~,~0.919201~.
$$

(3)  One has 
$|v_{\pm}|=0.4523737623\ldots$. As we said above, 
the spectral value $\tilde{q}_1$ 
is the closest to $0$. Of the other 
real spectral values closest to the border of $\mathbb{D}_{1/2}$ 
(and also to $0$) is 
$w:=0.5169593598\ldots$. It seems that it is the next closest to $0$ 
(after $v_{\pm}$) among all spectral values because the next after $w$ 
closest to $0$ of the zeros of 
Res$(\theta _{(18)},\partial \theta _{(18)}/\partial z,z)$ 
equal $0.5373389195\ldots \pm i\, 0.1803273369\ldots$. 
Their modulus equals $0.5667901400\ldots$.}
\end{rems}

\section{Proof of Proposition~\protect\ref{3svs}
\protect\label{pr3svs}}

The statements of the proposition concerning $\tilde{q}_1$ and the double 
zero $-7.5\ldots$ are proved in \cite{KoSh} and \cite{Ko11}. The lemmas 
from this section are proved in the next one.

\begin{lm}\label{lmsepar}
For $q\in U$ and $|z|=|q|^{-2}$, and for any $r$ as in Section~\ref{NPR}  
one has  
$\theta _r(q,z)\neq 0$.
\end{lm}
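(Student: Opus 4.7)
The plan is a modulus-comparison argument. Writing
\[
\theta_r(q,z) = \theta_{(4)}(q,z) + T_r(q,z), \qquad T_r(q,z) := \sum_{j=5}^{n} r_j\, q^{j(j+1)/2} z^j,
\]
I would prove, uniformly in $q\in U$, on $|z|=|q|^{-2}$, and in $r_j\in[0,1]$, that $|\theta_{(4)}(q,z)|>|T_r(q,z)|$; this yields $\theta_r(q,z)\neq 0$ immediately.

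The tail is the easy part. On $|z|=|q|^{-2}$ each summand of $T_r$ has modulus at most $|q|^{j(j-3)/2}$, so
\[
|T_r(q,z)| \le \sum_{j=5}^{\infty} |q|^{j(j-3)/2}.
\]
Over $U$ one has $|q|\le \mu := \sqrt{(\rho+\varepsilon)^{2}+(\tau+\varepsilon)^{2}}$, barely above $|v_\pm|\approx 0.45237$; numerically this sum is well under some explicit $\eta_1<0.021$ (the first three summands $\mu^{5}+\mu^{9}+\mu^{14}\approx 0.0207$ already almost exhaust it, the remaining terms decaying super-exponentially).

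The main task is then to establish a lower bound $|\theta_{(4)}(q,z)|\ge \eta_2>\eta_1$ on the same locus. Since $U$ has diameter below $3\cdot 10^{-10}$ and the coefficients $1,q,q^3,q^6,q^{10}$ of $\theta_{(4)}$ are analytic in $q$, a trivial Cauchy estimate gives $|\theta_{(4)}(q,z)-\theta_{(4)}(q_0,z)|=O(\varepsilon)$ uniformly for $|z|\le 5$, so it is enough to work at the centre $q_0:=\rho+i\tau$ and absorb this negligible correction (and also the $O(\varepsilon)$ wobble in the radius $|q|^{-2}$). Factoring $\theta_{(4)}(q_0,z)=q_0^{10}\prod_{i=1}^{4}(z-z_i)$, I would locate the four roots $z_i$ explicitly — either via the quartic formula, or by applying Rouch\'e's theorem with the same tail bound to place them in small neighbourhoods of the first four zeros of $\theta(q_0,\cdot)$, namely $\xi_1\approx -3.28-1.48\,i$, the double zero $\xi_2=\xi_3\approx -5.96+6.10\,i$, and $\xi_4$ of modulus $\approx 24$. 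One then verifies that each $|z_i|$ differs from $R_0:=|q_0|^{-2}\approx 4.886$ by a margin $d_*$ of order one, which gives $|\theta_{(4)}(q_0,z)|\ge |q_0|^{10}\prod_{i=1}^{4}\bigl||z|-|z_i|\bigr|\ge |q_0|^{10}d_*^{4}$ on the circle $|z|=R_0$, a quantity comfortably larger than $\eta_1$.

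The main obstacle is precisely this middle step: producing a rigorous, explicit positive lower bound for $|\theta_{(4)}(q_0,z)|$ on the circle $|z|=R_0$. The qualitative geometry is favourable, because $R_0\approx 4.89$ lies safely between the first zero of $\theta(v_+,\cdot)$ of modulus $\approx 3.6$ and the double zero of modulus $\approx 8.5$, so one expects the four roots of $\theta_{(4)}(q_0,\cdot)$ to stay clear of the circle; but turning this picture into a verified inequality requires a careful, if finite, numerical computation of the quartic's roots, or equivalently a careful Rouch\'e argument with quantitative truncation bounds between $\theta$ and $\theta_{(4)}$ on a disk of radius $\approx 30$.
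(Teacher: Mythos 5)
Your first step coincides exactly with the paper's: the same splitting $\theta_r=\theta_{(4)}+T_r$ and the same tail estimate $|T_r|\le\sum_{j\ge 5}|q|^{j(j-3)/2}<0.02$ on $|z|=|q|^{-2}$, after which it suffices to bound $|\theta_{(4)}|$ from below on that circle. The gap is in the step you yourself flag as the obstacle: the lower bound on $|\theta_{(4)}|$ is the entire content of the lemma, and the mechanism you propose for it is both uncarried-out and quantitatively at risk. With $|q_0|^{10}\approx 3.6\times 10^{-4}$, the estimate $|\theta_{(4)}(q_0,z)|\ge |q_0|^{10}\prod_i\bigl||z|-|z_i|\bigr|$ requires $\prod_i\bigl|R_0-|z_i|\bigr|>55$, i.e.\ an average margin of about $2.7$ per root; but the quartic demonstrably has a root of modulus near $3.3$--$3.6$ (the paper's own remark records the smallest zero), contributing a factor of only about $1.5$, so the remaining three margins must average above $3.3$. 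That may or may not hold --- the replacement of $|z-z_i|$ by $\bigl||z|-|z_i|\bigr|$ is very lossy when roots sit at different arguments, and since the true minimum of $|\theta_{(4)}|$ on this circle is itself only of order $0.04$ (this is what the paper verifies), you have essentially no room to lose a factor of $2$. Moreover, your fallback for locating the roots --- Rouch\'e around the first four zeros of $\theta$ --- does not work for the larger ones: at $|z|\approx 8.5$ the discarded tail $\sum_{j\ge 5}|q|^{j(j+1)/2}|z|^j$ is already about $0.3$ (and about $50$ at $|z|\approx 24$), so the roots of the degree-$4$ truncation are nowhere near $\xi_2,\xi_3,\xi_4$; only the explicit quartic-formula route is viable, and then the needed margins are whatever they turn out to be.

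The paper avoids this entirely: it parameterizes the circle as $z=|q|^{-2}e^{it}$, writes $\mathrm{Re}\,\theta_{(4)}$ and $\mathrm{Im}\,\theta_{(4)}$ at $q=q_0$ as explicit trigonometric polynomials in $t$, and checks (by machine) that on each of the four subintervals $[0,4]$, $[4,4.5]$, $[4.5,5.5]$, $[5.5,2\pi]$ one of the two has absolute value exceeding $0.04$; a separate coefficient-perturbation estimate (variation $<10^{-3}$ as $q$ ranges over $U$) then gives $|\theta_{(4)}|>0.03>0.02$ uniformly. If you want to salvage your route, you would either have to compute the four roots rigorously and hope the modulus margins suffice, or replace the product of modulus differences by a genuine minimization of $|\theta_{(4)}|$ over the circle --- which is in effect what the paper's argument does.
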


\begin{rem}
{\rm  The family of functions $\theta _r$ 
contains, in particular, the functions $\theta _{(18)}$ and $\theta$. 
Lemma~\ref{lmsepar} implies that the smallest of the moduli of the 
zeros of any of the functions $\theta _r$ is less than $|q|^{-2}$ when 
$q\in U$. Indeed, the smallest modulus zero of 
$\theta _{(18)}(\rho +i\, \tau ,.)$ equals 
$-3.27794407050033\ldots -i\, 0.148307531004121\ldots$, its modulus is 
less than $4$ while for $q\in U$ one has $5<|q|^{-2}$; this can be checked 
numerically. There 
remains to apply a continuity argument.

By part (4) of Theorem~\ref{tmsepar}, for $q\in U$ the function $\theta$ 
has two simple zeros or one double zero whose moduli belong to the interval 
$[|q|^{-2},|q|^{-7/2}]$.}
\end{rem}

\begin{lm}\label{lmthetazz}
(1) For $(q,z)\in U\times V$ one has 
Re$(\theta ^*)\in (0.03,0.08)$ and 
Im$(\theta ^*)\in (0.15,0.20)$.

(2)  For $(q,z)\in U\times V$ one has 
Re$(\theta _{qz})\in (-0.70,0.84)$ and 
Im$(\theta _{qz})\in (-2.33,-0.79)$.

\end{lm}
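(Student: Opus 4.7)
The plan is a rigorous numerical verification along the lines of the $DR$/$DI$ bookkeeping of Section~\ref{NPR}. Both series (\ref{thetaqz}) and (\ref{thetazz}) converge very rapidly on the tiny polydisk $U\times V$ because the exponent of $q$ in the $j$-th term grows quadratically in $j$ while the exponent of $z$ grows only linearly, and on $U\times V$ one has $|q|\le\sqrt{(\rho+\varepsilon)^2+(\tau+\varepsilon)^2}<0.4524$ and $|z|<\sqrt{5.965^2+6.106^2}<8.536$.

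First I would fix a reference point $q_0:=\rho+i\tau\in U$ and $z_0:=-5.963+6.104\,i\in V$ (the centers of $U$ and $V$) and compute $\theta^{*}(q_0,z_0)$ and $\theta_{qz}(q_0,z_0)$ to a few decimal digits via truncation at a suitable order $N$. Direct calculation gives values close to $0.055+0.175\,i$ and $0.07-1.56\,i$ respectively, which sit near the centers of the four advertised intervals.

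Next I would bound the tail: the modulus of the $j$-th term of either series is at most $C\,j^{2}\cdot 0.4524^{j(j+1)/2-3}\cdot 8.536^{j-1}$, which for $j\ge 12$ is already well below $10^{-8}$, hence negligible against the widths $0.05$ (for $\theta^{*}$) and $1.54$ (for $\theta_{qz}$) of the target intervals. Then I would control the variation of the truncated polynomials $\theta^{*}_{(N)}$ and $\theta_{qz,(N)}$ over $U\times V$ via the subadditivity (\ref{DR1ineq}): for a single monomial $c\,q^{a}z^{b}$ the diameter of its image is at most
$$|c|\bigl(a\,|q|^{a-1}|z|^{b}\cdot\mathrm{diam}(U)+b\,|q|^{a}|z|^{b-1}\cdot\mathrm{diam}(V)\bigr),$$
with $\mathrm{diam}(U)\le 2\sqrt{2}\,\varepsilon=O(10^{-9})$ and $\mathrm{diam}(V)=4\sqrt{2}\cdot 10^{-3}$. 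The $q$-contribution is utterly negligible; summing the $z$-contributions termwise for $j\le N$ produces explicit upper bounds on $DR[\theta^{*}]$, $DI[\theta^{*}]$, $DR[\theta_{qz}]$, $DI[\theta_{qz}]$ small enough that, when added to and subtracted from the reference-point values, the resulting intervals lie strictly inside the four target intervals.

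The main obstacle is purely arithmetical bookkeeping: the narrow width $0.05$ of the $\theta^{*}$-intervals forces one to carry at least three to four decimal digits of precision at the reference point and to keep the tail error plus the rigorous $DR$/$DI$-width error safely below, say, $0.02$. No conceptual difficulty arises; since $|q|<1/2$ the term sizes $c_j|q|^{a_j}|z|^{b_j}$ decay super-exponentially once $j$ passes the ``hump'' near $j\approx 7$, where the quadratic growth of $a_j$ definitively dominates the linear growth of $b_j$, and $N=12$ (or a modestly larger value) comfortably suffices.
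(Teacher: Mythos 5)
Your proposal follows essentially the same route as the paper's proof: evaluate the first few monomials of (\ref{thetazz}) and (\ref{thetaqz}) at a reference point of $U\times V$, bound the rapidly decaying tail, and control the variation over $U\times V$ via the subadditive $DR/DI$ bookkeeping of (\ref{DR1ineq}) --- the paper does exactly this, except that it evaluates at the two corners $A$ and $C$ of $V$ (where the extrema in $z$ are attained) and bounds the variation with the multiplicative perturbation estimates of Example~\ref{ex1} rather than with a derivative bound at the center. Your error budgets match the paper's ($DR,DI<0.02$ for $\theta^*$ and $<0.76$ for $\theta_{qz}$), so the plan is sound.
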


For $q=\rho +i\, \tau$, the function $\theta _z(q,.)$ has a zero 
which equals $-5.963\ldots+i\, 6.104\ldots$. This is a simple zero of 
$\theta _z(q,.)$, see part (1) of Lemma~\ref{lmthetazz}. 
Hence it can be considered as a function $\eta (q)$ (as long as $q\in U$ 
and the values of this function belong to $V$). 

Consider the level sets $\{ \theta ={\rm const}\}$. 
The function $\theta$ satisfies the equality 

\begin{equation}\label{thetadiffeq}
2q\theta _q=z((z\theta )_{zz})=
z^2\theta _{zz}+2z\theta _z~.
\end{equation}  
As $\theta _z=0$ along the graph of $\eta$ and as 
$\theta _{zz}\neq 0$ in $U\times V$ 
(see part (1) of Lemma~\ref{lmthetazz}), one deduces from (\ref{thetadiffeq}) 
that $\theta _q\neq 0$ along the graph of $\eta$. Hence the 
level sets $\{ \theta ={\rm const}\}$ are locally analytic 
at their intersection points with this graph and their tangent 
spaces at these points are parallel to the $z$-space.

Differentiating the equality $\theta _z(q,\eta (q))=0$ w.r.t. $q$ one gets 
$\eta _q=-\theta _{qz}/\theta _{zz}=-\theta _{qz}/(2q^3)\theta ^*$. 
Lemma~\ref{lmthetazz} implies that 

\begin{equation}\label{etaeq}
|\eta _q|\leq (0.84^2+2.33^2)^{1/2}/2(q^*)^3(0.03^2+0.15^2)^{1/2}
\leq 87.44992430\ldots <87.45~,
\end{equation}
where $q^*:=(0.4353184956^2+0.1230440084^2)^{1/2}=0.4523737621\ldots$ 
is the smallest possible modulus of a number from $U$. 

$$\begin{array}{cccclcl}{\rm Set}~&q_a&:=&&
0.4353184958244864&+&i\, 0.1230440085519491~,\\& 
z_a&:=&-&5.963923719619588&+&i\, 6.104775174235743~.\end{array}$$ 
One can check numerically that:

\begin{equation}\label{approx}
\begin{array}{ccllllll}
\theta (q_a,z_a)&=
&-&1.6\ldots \times 10^{-15}&+&i\, 2.8\ldots \times 10^{-16}&=:&\chi _0~,\\ 
\theta _z(q_a,z_a)&=&-
&8.0\ldots \times 10^{-16}&+&i\, 0.0\ldots \times 10^{-15}&=:&\lambda ^*~.
\end{array}
\end{equation}
Consider for $q=q_a$ 
the vector field $\dot{z}=-1/\theta _{zz}$ 
(we denote the time by $\lambda$, i.~e. $\dot{z}=dz/d\lambda$). 
Its phase curve which for $\lambda =0$ passes through $z=z_a$, for  
$z=\lambda ^*$ passes through a point $z^*$ with $\theta _z(q_a,z^*)=0$. As 
$|\theta _{zz}|\geq (0.03^2+0.15^2)^{1/2}$ (see part (1) of 
Lemma~\ref{lmthetazz}), one has 

\begin{equation}\label{estimz}
|z^*-z_a|\leq |\lambda ^*|/(0.03^2+0.15^2)^{1/2}=5.2\ldots \times 10^{-15}
\end{equation}
The restriction $W$ of the subset $\{ \theta _z=0\}$ to the cartesian product 
$U\times V$ is locally a smooth complex curve. The set $W$ is the graph of 
a function, continuous on $U$ and analytic inside $U$. Indeed, consider 
inequalities (\ref{etaeq}). Denote by $(q',z')$ and $(q'',z'')$ any two 
points of $U\times V$. The distance between any two points of $U$ is 
$\leq 2\varepsilon$, therefore 
$|z'-z''|\leq (2\varepsilon )\times 87.45=3.498\times 10^{-8}$. Set 
$z':=z^*$. The last inequality, 
combined with inequality (\ref{estimz}) and the definition of $z_a$ and $V$, 
implies that $z''\in V$. Hence for any $q\in U$, the value of $\eta$ belongs 
to the set $V$. Analyticity and continuity of the function $\eta (q)$ 
follow from the fact that $\eta$ is a simple zero of $\theta _z$.  

Denote by $I\subset \mathbb{C}$ the segment with extremities at 
$0$ and $\lambda ^*$. The maximal possible value of $|\theta _z|$ 
at a point of the phase curve (with $\lambda \in I$) is not larger than 

$$\left( \max _{U\times V}|\theta _{zz}|\right) \times |\lambda ^*|\leq 
(0.08^2+0.20^2)^{1/2}\lambda ^*=1.7\ldots \times 10^{-16}=:\mu _0~.$$  
One has $\theta (q_a,z_a)=\chi _0$. 
Therefore 

\begin{equation}\label{GGG}
|\theta (q_a,z^*)|\leq |\chi _0|+|z^*-z_a|\mu _0< 
1.625\ldots \times 10^{-15}~.
\end{equation}
Define a vector field on $W$, with time $q$, by the formulas 
$q_q=1$, $z_q=-\theta _{qz}/\theta _{zz}=-\theta _{qz}/(2q/z^2)\theta _q$ 
(see (\ref{thetadiffeq})). Introduce as new time the value of $\theta$. Hence 
the vectorfield is defined by the formulas 

\begin{equation}\label{vf}
dq/d\theta =1/\theta _q~~,~~dz/d\theta =-\theta _{qz}/(2q/z^2)(\theta _q)^2~.
\end{equation}
Denote by $q^{\dagger}$ the value of $q$ (if it exists) 
for which the phase curve of this 
vector field with initial condition $(q,z)=(q_a,z^*)$ passes through 
a point $(q^{\dagger},\tilde{z})$ such that $\theta (q^{\dagger},\tilde{z})=0$. 
We want to show that $q^{\dagger}\in U$ (hence 
$(q^{\dagger},\tilde{z})\in U\times V$) which implies that this value of $q$ 
indeed exists.

For $(q,z)\in W$ one has $\theta _q=q^2z^2\theta ^*$ (this follows from 
(\ref{thetadiffeq}) and the definition of $\theta ^*$). The extremal values of 
$\arg (q^2z^2)$ (for $(q,z)\in U\times V$) are obtained for 

$$q=\rho +\varepsilon +i\, (\tau -\varepsilon ),~z=C~~{\rm and}~~
q=\rho -\varepsilon +i\, (\tau +\varepsilon ),~z=A~.$$
They equal respectively

$$-1.043893693643218\ldots ~~{\rm and}~~-1.042567942295371\ldots ~.$$
The extremal values of $|q^2z^2|$ are obtained for 

$$q=\rho -\varepsilon +i\, (\tau -\varepsilon ),~z=B~~{\rm and}~~
q=\rho +\varepsilon +i\, (\tau +\varepsilon ),~z=D~.$$
They are equal to

$$3.858934465358369\ldots ~~{\rm and}~~3.861493307333390\ldots ~.$$
By part (1) of Lemma~\ref{lmthetazz} one has

$$\begin{array}{rcclclc}
\arg (\theta ^*)&\in &[&\, 1.080839000541168\ldots &,&1.421906379185399\ldots &]\\ 
|\theta ^*|&\in &[&\, 0.1529705854077835\ldots &,&0.2154065922853802\ldots &]
\end{array}$$ 
Thus
$$\begin{array}{rcclclc}
\arg (q^2z^2\theta ^*)&\in &
[&\, 0.036945306897950\ldots &,&0.379338436890028\ldots &]\\
|q^2z^2\theta ^*|&\in &[&\, 0.5903034642161417\ldots &,&0.8317911144654879\ldots &]
\end{array}$$
and 
\begin{equation}\label{FFF}
\begin{array}{rcclclc}
\arg (1/\theta _q)&\in &
[&\, -0.379338436890028\ldots &,&-0.036945306897950\ldots &]\\  
|1/\theta _q|&\in &[&~~\, 1.202224912732572\ldots &,
& ~~1.694043929299806\ldots &]
\end{array}
\end{equation}
The quantity $q^{\dagger}$ is obtained by integrating the value of $1/\theta _q$ 
when the value of $\theta$ runs over the segment with extremities 
$\theta (q_a,z^*)$ and $0$. Inequalities (\ref{GGG}) and (\ref{FFF}) imply that 
$|q^{\dagger}-q_a|<10^{-10}$ hence $q^{\dagger}\in U$.

\section{Proofs of Lemmas~\protect\ref{lmsepar} and 
\protect\ref{lmthetazz}\protect\label{pr2lm}}

In the proofs of Lemmas~\ref{lmsepar} and \ref{lmthetazz} we use the 
following example:

\begin{ex}\label{ex1}
{\rm Consider the monomial 
$10q^{12}z^3$ (this is the fourth monomial of $\theta ^*$, see (\ref{thetazz})). 
Set 
$$\delta :=|2(1+i)\varepsilon /(\rho +i\, \tau -(1+i)\varepsilon )|=
1.250482394\ldots \times 10^{-9}~.$$ 
If instead of $q=\rho +i\tau$ one chooses another value of 
$q$ from the rectangle $U$, then:   

\begin{enumerate}
\item 
$|q|$ is multiplied by a real number from the interval 
$[1-\delta ,1+\delta ]$. 
Indeed, the numbers from $U$ have positive real and imaginary parts. 
The maximal and minimal possible ratios of moduli of numbers from $U$ 
are 

$$|(\rho +i\, \tau +(1+i)\varepsilon )/
(\rho +i\, \tau -(1+i)\varepsilon )|^{\pm 1}$$
from which the claim follows.

\item 
$|10q^{12}z^3|$ is multiplied by a number from the interval 

$$[(1-\delta )^{12},(1+\delta )^{12}]=
[0.9999999628\ldots ,1.000000036\ldots ]~;$$ 
this results directly from 1. (If we consider 
the monomial $15q^{18}z^4$, then such a change of $q$ would multiply 
$|15q^{18}z^4|$ by a number from the interval 
$[(1-\delta )^{18},(1+\delta )^{18}]$ etc.) 

\item 
The argument of $q$ can change by not more than 
$$\arg ((\rho -\varepsilon +i\, (\tau +\varepsilon ))/
(\rho +\varepsilon +i\, (\tau -\varepsilon )))=
1.091393649\ldots \times 10^{-9}~.$$
Clearly, such changes of $q$ can change the real or the imaginary part 
of the sum of the first six monomials of $\theta ^*$ (see (\ref{thetazz})) 
by less than $10^{-7}$. 

\item 
If one has to change the value of $z$ by any value from $V$, then one 
multiplies $|z|$ by a number from the interval 
$[|B/D|,|D/B|]=[0.9999922545\ldots ,1.000007306\ldots ]$ 
while its argument changes by 
not more than $\arg (A/C)=0.0006628745824\ldots $. 
This can change the sum of the 
first six monomials of $\theta ^*$ 
by a term whose modulus is less than $10^{-2}$. The same is true if one changes 
simultaneously $q$ and $z$.
\end{enumerate}}
\end{ex}

\begin{proof}[Proof of Lemma~\ref{lmsepar}]
For $q\in U$ and $|z|=|q|^{-2}$ one has 

\begin{equation}\label{theta3}
|\theta _{r}-\theta _{(4)}|\leq \sum _{j=5}^{n}|q|^{j(j+1)/2}|q|^{-2j}= 
\sum _{j=5}^{n}|q|^{j(j-3)/2}\leq \sum _{j=5}^{\infty}|q|^{j(j-3)/2}<0.02~. 
\end{equation} 
As for the polynomial $S:=\theta _{(4)}=1+qz+q^3z^2+q^6z^3+q^{10}z^4$, 
when one sets 
$z:=|q|^{-2}(\cos t+i\sin t)$, for $q=\rho +i\tau$ one gets 

$$\begin{array}{ccl}
{\rm Re}S&=&1+2.127219493\ldots \, \cos t-0.601264628\ldots \, \sin t\\&&+
1.497715597\ldots \, \cos (2t)
-1.625862822\ldots \, \sin (2t)-0.08191367962\ldots \, \cos (3t)\\&&-
0.9966394282\ldots \, \sin (3t)
-0.1895137736\ldots \, \cos (4t)-0.07721972768\ldots \, \sin (4t)\\ 
{\rm Im}S&=&2.127219493\ldots \, \sin t+0.6012646284\ldots \, \cos t+
1.497715597\ldots \, \sin (2t)\\&&+1.625862822\ldots \, \cos (2t)-
0.08191367962\ldots \, \sin (3t)+0.9966394282\ldots \, \cos (3t)\\&&
-0.1895137736\ldots \, \sin (4t)+0.07721972768\ldots \, \cos (4t)
\end{array}$$     

When one varies the values of $q$ while remaining in the rectangle $U$ one 
cannot change any of the coefficients of these trigonometric polynomials 
by more than $10^{-5}$. Indeed, 

\begin{enumerate}
\item 
The coefficient of 
$\cos (kt)$ or $\sin (kt)$ equals $\pm |q|^{-2k}$Re$(q^{k(k+1)/2})$ or 
$\pm |q|^{-2k}$Im$(q^{k(k+1)/2})$, $1\leq k\leq 4$. 

\item 
The possible variations of 
$q^{k(k+1)/2}$ (for $q\in U$) can be deduced from parts 1 and 3 of 
Example~\ref{ex1}. The moduli 
of these numbers are $<0.46$. 

\item 
The minimal and maximal 
possible values of $|q|^{-2k}$ for $q\in U$ are obtained for 
$q=\rho +i\, \tau \pm (1+i)\varepsilon$. For $k=4$ they equal 
$570.1914944\ldots$ and 
$570.1914999\ldots$, i.~e. the modulus of 
their difference is $<6\times 10^{-6}$. For $k=1,2$ and $3$ this modulus 
is even smaller.
\end{enumerate}    

As the sum of the moduli of all coefficients of Re$S$ and Im$S$  
is less than $50$ and one has $0\leq |\cos (kt)|, |\sin (kt)|\leq 1$, 
the values of Re$S$ and Im$S$ 
can vary by less than $10^{-3}$ when $q\in U$.

For $q=\rho +i\, \tau$ and for 
$t\in [0,4]$ (resp. $t\in [4,4.5]$ or $t\in [4.5,5.5]$ or 
$t\in [5.5,2\pi ]$) 
one has Im$S>0.04>0.02$ (resp. Re$S<-0.04$ or Im$S<-0.04$ or 
Re$S>0.04$) hence $|S|>0.04>0.02$.   
One can prove that the claimed inequalities hold true in the 
mentioned intervals by showing (say, using MAPLE) that the 
corresponding equalities have no solutions in these intervals 
and that for at least one point of the interval 
there is strict inequality. Hence for $q\in U$ and for $|z|=|q|^{-2}$ one has 
$|\theta _{(4)}|>0.03>0.02\geq |\theta _{r}-\theta _{(4)}|$. The lemma follows 
now from the Rouch\'e theorem.
\end{proof}

\begin{proof}[Proof of Lemma~\ref{lmthetazz}]
Recall that equality (\ref{thetazz}) holds true.  
Our aim is to estimate the real and imaginary parts of the monomials in 
its right-hand side for $(q,z)\in U\times V$. 
First of all we list the values of 
the first several monomials (excluding the constant term $1$) for $z=A$ or  
$z=C$ and for 
$q=\rho +i\tau$. The monomials and their values are:

$$3q^3z~,~6q^7z^2~,~10q^{12}z^3~,~15q^{18}z^4~,~21q^{25}z^5$$
$$\begin{array}{llll}
~~~~~~z=A&~~~&~~~~~~z=C&\\ 
-2.368899634\ldots &-i\, 0.06868680921\ldots &-2.368835235\ldots 
&-i\, 0.07025653317\ldots \\ 
+1.600516792\ldots &+i\, 0.554377995\ldots &+1.599755638\ldots &+
i\, 0.5564907703\ldots \\ 
-0.2788813462\ldots &-i\, 0.3612445530\ldots &-0.2781559523\ldots 
&-i\, 0.3617900215\ldots \\ 
-0.009845358519\ldots &+i\, 0.0490842341\ldots 
&-0.009975161466\ldots &+i\, 0.0490564368\ldots \\ 
+0.002251080781\ldots &-i\, 0.0005556207520\ldots 
&+0.002252822699\ldots 
&-i\, 0.0005481355646\ldots \end{array}$$
Observe that the sum of the real (resp. the imaginary) parts of these five 
monomials belongs to the interval $(0.05,0.06)$ (resp. $(0.17,0.18)$),   
for $z=A$ and for $z=C$. 

Next we use Example~\ref{ex1}. 
As $DR[3q^3z+\cdots +21q^{25}z^5]\leq DR[3q^3z]+\cdots +DR[21q^{25}z^5]$, 
one can write 

\begin{equation}\label{DR1}
DR_1:=DR[3q^3z+\cdots +21q^{25}z^5]\leq 10^{-2}~~{\rm and}~~
DI_1:=DI[3q^3z+\cdots +21q^{25}z^5]\leq 10^{-2}~.
\end{equation}
Starting with $21q^{25}z^5$ the moduli of the monomials decrease faster than 
a geometric progression with ratio $1/10$. Therefore the sum of all 
other monomials  
(i. e. $28q^{33}z^6+36q^{42}z^7+\cdots$) 
contributes to the real and the imaginary part of 
$\theta ^*$ by less than $10^{-4}$. Thus for 
$(q,z)\in U\times V$ one has $DR[\theta ^*]<0.02$ and 
$DI[\theta ^*]<0.02$, i. e. 
Re$(\theta ^*)\in (0.03,0.08)$ and  
Im$(\theta ^*)\in (0.15,0.20)$. 
This proves part (1) of the lemma. 

\begin{rem}
{\rm The small possible variation of $\arg z$ when $z\in V$ 
implies that for $q=\rho +i\tau$ the extremal possible values of 
the real and imaginary parts of the indicated monomials are attained for 
$z=A$ and $z=C$, the values of $z\in V$ with largest and smallest possible 
arguments.} 
\end{rem}
To prove part (2) using the same scheme of reasoning we use equality 
(\ref{thetaqz}). 
The first several monomials (excluding $1$) and their values for $z=A$ or  
$z=C$ and for 
$q=\rho +i\tau$ are equal to:

$$6q^2z~,~18q^5z^2~,~40q^9z^3~,~75q^{14}z^4~,~126q^{20}z^5~,~196q^{27}z^6$$
$$\begin{array}{llcll}
~~~~~~z=A&&~~~&~~~~~~z=C\\
-10.16093686\ldots &+i\, 2.556447275\ldots &~~~&
-10.16255051\ldots &+i\, 2.549691538\ldots \\ 
+24.24583379\ldots &-i\, 5.358044687\ldots &&+24.25254021\ldots &-i\, 
5.325813590\ldots \\ 
-19.64443131\ldots &-i\, 1.712625563\ldots &&-19.64053031\ldots &-i\, 
1.751646827\ldots \\ 
+4.696498002\ldots &+i\, 3.697047043\ldots &&+4.686533589\ldots &
+i\, 3.709371862\ldots \\ 
-0.03562046677\ldots &-i\, 0.7334753216\ldots &&
-0.03318797953\ldots &-i\, 0.7335609426\ldots \\
-0.03337082701\ldots &+i\, 0.01773395043\ldots &&
-0.03343954113\ldots &+i\, 0.01760026851\ldots 
\end{array}$$
The sums of the real (resp. imaginary) parts of these monomials and $1$ 
belong to 
the interval $(0.06,0.08)$ (resp. $(-1.57,-1.55)$). 
Starting with $196q^{27}z^6$, the moduli of the monomials 
decrease faster than a geometric progression with ratio $1/10$. 

We established in the proof of part (1) inequalities (\ref{DR1}) 
using inequality 
(\ref{DR1ineq}). 
Compare the monomials $3q^3z$, $\ldots$, $21q^{25}z^5$ 
with the monomials $18q^5z^2$, $\ldots$, 
$196q^{27}z^6$. The respective numerical coefficients increase less than 
$10$ times, the degree of $q$ is larger by $2$ 
and the one of $z$ is larger by $1$. 
The maximal possible value of $|q^2z|$ in $U\times V$ is $<1.8$. 
Therefore 

$$\begin{array}{llllll}DR_2&:=&DR[18q^5z^2+\cdots +196q^{27}z^6]&\leq& 
10\times 1.8\times (DR_1+DI_1)\leq 0.72&~~{\rm and}\\
DI_2&:=&DI[18q^5z^2+\cdots +196q^{27}z^6]&\leq& 
10\times 1.8\times (DR_1+DI_1)\leq 0.72~.&\end{array}$$
Consider $\theta ^0:=\theta _{qz}-(\theta _{qz})_{(6)}=
288q^{35}z^7+405q^{44}z^8+\cdots$.  
It is true that $DR[6q^2z]<10^{-2}$, $DI[6q^2z]<10^{-2}$, 
$DR[\theta ^0]\leq 10^{-3}$ and 
$DI[\theta ^0]\leq 10^{-3}$. Hence 
$DR[\theta _{qz}]<0.76$ and $DI[\theta _{qz}]<0.76$ 
from which part (2) of the lemma follows.
\end{proof}

\end{document}